\documentclass[a4paper,11pt]{amsart}
\usepackage{amssymb}
\usepackage{amscd}
\usepackage{comment}
\usepackage{amsmath,amsthm}
\usepackage[colorlinks=true]{hyperref}
\usepackage{enumerate}
\usepackage{booktabs,multirow}
\usepackage{tikz}
\usepackage{rotating}
\usetikzlibrary{patterns}
\usetikzlibrary{decorations.pathreplacing}
\usetikzlibrary{calc,through}
%%\usetikzlibrary{angles,quotes}

\allowdisplaybreaks[1]%allows to break equations if really necessary
\setlength{\parskip}{4pt}%horiz space between paragraphs
\setlength{\oddsidemargin}{0in}
\setlength{\evensidemargin}{0in}
\setlength{\marginparwidth}{0in}
\setlength{\marginparsep}{0in}
\setlength{\marginparpush}{0in}
\setlength{\textwidth}{6.5in}

\numberwithin{figure}{section}
\numberwithin{equation}{section}

\title{Rational Dyck tilings}
\author[K.~Shigechi]{Keiichi~Shigechi}
%%\address{}
\email{k1.shigechi AT gmail.com}
%%\subjclass[2000]{}
%%\keywords{}
\date{\today}
%%\thanks{}

%%%%%%%%%%%%%%%%%%%%%%%%%%%%%%%%
\newcommand\tikzpic[2]{
\raisebox{#1\totalheight}{
\begin{tikzpicture}
#2
\end{tikzpicture}
}}
%%%%%%%%%%%%%%%%%%%%%%%%%%%%%%%%
%%
\newtheorem{theorem}[figure]{Theorem}%%[section]
\newtheorem{example}[figure]{Example}
\newtheorem{lemma}[figure]{Lemma}
\newtheorem{defn}[figure]{Definition}
\newtheorem{prop}[figure]{Proposition}

\newtheorem{remark}[figure]{Remark}
%%
%%%%%%%%%%%
%%%%%%%%%%%
\begin{document}
\begin{abstract}
We introduce rational Dyck tilings, or $(a,b)$-Dyck tilings, 
and study them by the decomposition into $(1,1)$-Dyck tilings.
This decomposition allows us to make use of combinatorial 
models for $(1,1)$-Dyck tilings such as the Hermite history and 
the Dyck tiling strip bijection.
Together with $b$-Stirling permutations associated to 
the rational Dyck tilings, we obtain a correspondence 
between an $(a,b)$-Dyck tiling and a tuple of $ab$ $(1,1)$-Dyck tilings.
\end{abstract}

\maketitle

\section{Introduction}
A rational Dyck tiling, also called an $(a,b)$-Dyck tiling,  
is a tiling by rational Dyck tiles in the 
region above $\lambda$ and below $\mu$, where $\lambda$ and $\mu$ 
are rational Dyck paths satisfying $\lambda\le\mu$.
There are two types of rational Dyck tilings: one is a cover-inclusive
tiling, and the other is a cover-exclusive tiling.
In this paper, we study the rational Dyck tilings by decomposing them into 
a tuple of Dyck tilings.

Dyck tilings naturally appear in relation with the computation 
of the parabolic Kazhdan--Lusztig polynomials $P^{\pm}_{\lambda,\mu}$ for the maximal 
parabolic subgroups of type $A$ \cite{Bre02,SZJ12}.
The computation of $P^{-}_{\lambda,\mu}$ requires cover-exclusive tilings as shown in \cite{Bre02}.
Similarly, one make use of cover-inclusive tilings to compute $P^{+}_{\lambda,\mu}$ as in \cite{SZJ12}.
Cover-inclusive Dyck tiligns also appear in research areas in mathematical physics \cite{FN12,KKP17,KW11,KW15,K12,Pa19,PelWu19,Pon18}. 
There are several generalizations of Dyck tilings.
First generalization is to consider the Kazhdan--Lusztig polynomials of other types, and 
this leads to ballot tilings for type $B$ \cite{S17}, and Dyck tilings of type $D$ in \cite{S21a}.
Second is to impose a symmetry on Dyck tilings, and this leads to symmetric Dyck tilings 
studied in \cite{JVK16,S20}. Symmetric Dyck tilings have common properties of both type $A$ and type $B$.
Third is to change the structure of Dyck path. In other words, we consider $b$-Dyck 
paths or more generally $(a,b)$-Dyck paths. 
This generalization gives $b$-Dyck tilings studied in \cite{JVK16}, and $(a,b)$-Dyck 
tilings which are the main object in this paper.

Since we decompose $(a,b)$-Dyck tilings into $ab$ $(1,1)$-Dyck tilings, 
main tools to study $(a,b)$-Dyck tilings can be reduced to the tools 
used for $(1,1)$-Dyck tilings.
We mainly make use of two approaches for $(1,1)$-Dyck tilings: 
the Hermite history, and the DTS bijection \cite{KMPW12,S19}. 
Both approaches behave nicely with the inversion number of 
a Dyck tiling compared to other approaches.
One can use a rooted tree to describe a $(1,1)$-Dyck tiling (see for example \cite{KMPW12,S19,SZJ12}). 
Then, the labels on the edges of the tree are given by the DTS 
bijection for a $(1,1)$-Dyck tiling \cite{KMPW12,S19,S20}. 
The labels are strictly increasing from the root to leaves of 
the tree.
Then, the post-order word of the labels gives a permutation 
which characterizes the $(1,1)$-Dyck tiling together with the shape of the tree.
On the other hand, to characterize an $(a,b)$-Dyck tiling, we make use of 
a multi-permutation instead of a permutation. In fact, we consider a special class of 
multi-permutations, $b$-Stirling permutations \cite{GesSta78,Par94a,Par94b}.
This restriction of multi-permutations corresponds to capture 
the shape of the tree in case of $(1,1)$-Dyck tilings.
This is because that a $b$-Stirling permutation is one-to-one 
to the $(b+1)$-ary tree with labels (see for example \cite{CebGonDLe19}). 

To relate cover-inclusive $(a,b)$-Dyck tilings with cover-exclusive 
$(a,b)$-Dyck tilings, we introduce an incidence matrix which expresses 
the cover-exclusive tiligns. 
The parabolic Kazhdan--Lusztig polynomials $P^{\pm}_{\lambda,\mu}$ 
are dual to each other, {\it i.e.}, $P^{\pm}_{\lambda,\mu}$ 
is obtained from $P^{\pm}_{\lambda,\mu}$ by taking the transpose of the 
inverse of it.
Therefore, the cover-inclusive Dyck tilings are expressed in terms of 
the incidence matrix by taking the inverse.

We introduce two types of decompositions of $(a,b)$-Dyck paths: 
the horizontal and the vertical decompositions. 
The horizontal decomposition was defined for $(1,b)$-Dyck paths in \cite{KalMuh15}.
A decomposition of an $(a,b)$-Dyck tiling into $ab$ $(1,1)$-Dyck tilings
imposes some constraints on them.
These constraints give relations among $(1,1)$-Dyck tilings, which 
insure that we have a $b$-Stirling permutation for the $(a,b)$-Dyck 
tiling.
Besides those, we can distinguish a $(a,b)$-Dyck tiling with or without 
non-trivial $(a,b)$-Dyck tiles by looking at $(1,1)$-Dyck tilings.

By the horizontal decomposition of an $(1,b)$-Dyck tiling, 
we obtain $(1,1)$-Dyck tilings.
By combining the DTS bijection with the constraints on $(1,1)$-Dyck 
tilings, we can compute the weight of the $(1,b)$-Dyck tiling
through via $(1,1)$-Dyck tilings.
Further, one can show that the Hermite history of the $(1,b)$-Dyck tiling
is compatible with the Hermite histories of the $(1,1)$-Dyck tilings. 
We have similar results for the vertical decomposition.

The paper is organized as follows.
In Section \ref{sec:rDT}, we introduce the notions of $(a,b)$-Dyck paths 
and $(a,b)$-Dyck tilings. Then, we briefly review the Hermite history of a 
$(1,1)$-Dyck tiling and the Dyck tiling strip bijection.
We establish relations between cover-inclusive and cover-exclusive rational
Dyck tilings in Section \ref{sec:IMat}. 
In Section \ref{sec:decomp}, we decompose an $(a,b)$-Dyck tilings into 
$ab$ Dyck tilings by use of the horizontal and vertical decompositions and 
$b$-Stirling permutations. 
In Section \ref{sec:weight}, we give a description of the weight of 
an $(a,b)$-Dyck tiling in terms of words of $(1,1)$-Dyck tilings obtained by
the DTS bijections.

\section{Rational Dyck tilings}
\label{sec:rDT}
\subsection{Rational Dyck paths}
Let $(a,b)\in\mathbb{N}^{2}$ be relatively prime positive integers, and $n\in\mathbb{N}$.
A rational Dyck path of size $n$ is a lattice path from $(0,0)$ to $(bn,an)$, which 
does not go below the line $y=ax/b$.
We call a rational path an $(a,b)$-Dyck path when we emphasize $(a,b)$.
\begin{figure}[ht]
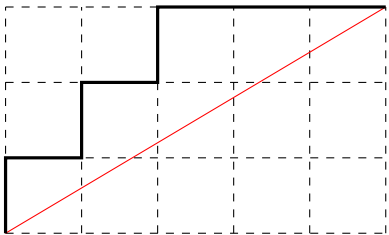

\tikzpic{-0.5}{
\draw[dashed](0,0)--(5,0)(0,1)--(5,1)(0,2)--(5,2)(0,3)--(5,3);
\draw[dashed](0,0)--(0,3)(1,0)--(1,3)(2,0)--(2,3)(3,0)--(3,3)(4,0)--(4,3)(5,0)--(5,3);
\draw[red](0,0)--(5,3);
\draw[very thick](0,0)--(0,1)--(1,1)--(1,2)--(2,2)--(2,3)--(5,3);
}
\caption{A rational Dyck path with $(a,b)=(3,5)$ and $n=1$.}
\label{fig:rDyck}
\end{figure}
Since an $(a,b)$-Dyck path $p$ is a lattice path, $p$ consists of north steps and east steps.
Here, north (resp. east) step means the vector $(0,1)$ (resp. $(1,0)$).
By assigning a north step $N$ and an east step $E$, we simply write the path $p$
as a word of $N$'s and $E$'s.

\begin{defn}
We denote by $\mathfrak{D}_{n}^{(a,b)}$ the set of $(a,b)$-Dyck paths 
of size $n$.
\end{defn}

Figure \ref{fig:rDyck} shows an example of a rational Dyck path with 
$(a,b)=(3,5)$ and $n=1$. 
The path is written as $NENENE^{3}$.

Since the region, which is surrounded by a rational Dyck path $p$, the line $x=0$, and 
the line $y=an$, can be regarded a Young diagram $\pi$, 
we identify $p$ with $\pi$ by abuse of notation.
In Figure \ref{fig:rDyck}, the path is identified with the Young diagram $(2,1)$.

Let $\lambda$ and $\mu$ be two rational Dyck paths.
Then, we say that $\mu$ is above $\lambda$ (or equivalently $\lambda$ 
is below $\mu$) if and only if the Young diagrams $\lambda$ and $\mu$ 
satisfy $\lambda\supseteq\mu$.
We denote by $\lambda\le\mu$ if $\mu$ is above $\lambda$.
Note that when $\mu$ is above $\lambda$, one can consider a skew shape 
$\lambda/\mu$.

We introduce a special class of $(a,b)$-Dyck paths which we call enlarged Dyck paths.
Let $p$ be a $(1,1)$-Dyck path expressed as a word of $N$'s and $E$'s.
We replace each $N$ and $E$ in $p$ by $N^{a}$ and $E^{b}$ and denote 
the new path by $p^{(a,b)}$.
Obviously, the path $p^{(a,b)}$ is a $(a,b)$-Dyck path.
We call the path $p^{(a,b)}$ an $(a,b)$-enlarged Dyck path.
If the size of a $(1,1)$-Dyck path $p$ is $n$, we define the size of $p^{(a,b)}$ is also $n$.

\begin{example}
we have two $(2,3)$-Dyck paths for $n=1$.
They are $N^{2}E^{3}$ and $NENE^{2}$.
The former is a $(2,3)$-enlarged Dyck path, but the latter 
is not. 
\end{example}

\subsection{Rational Dyck tilings}
Let $\lambda\le\mu$ be two rational Dyck paths.
A ribbon is a skew shape $\lambda/\mu$ which does not contain a two-by-tow box.
Then, an $(a,b)$-Dyck tile $d$ is defined as a ribbon such that 
the centers of boxes form an $(a,b)$-enlarged Dyck path $p^{(a,b)}$. 
We say that the tile $d$ is characterized by the path $p^{(a,b)}$ 
of size $n$.

Note that a single box is also an $(a,b)$-Dyck tile since a point 
is a rational Dyck path of size zero.

\begin{defn}
\label{defn:rDT}
Let $\lambda\le\mu$ be two rational $(a,b)$-Dyck paths.
A rational Dyck tiling (or a $(a,b)$-Dyck tiling) in the region $\lambda/\mu$ 
is a tiling by $(a,b)$-enlarged Dyck tiles.
\end{defn}

A box $(x,y)$ means a box whose center is $(x,y)$.
Let $b$ be a box $(x,y)$.
Then, a box $(x-1,y+1)$ is said to be NW (north-west) of $b$, a box $(x,y+1)$ is 
N of $b$, and a box $(x-1,y)$ is W of $b$.

In Definition \ref{defn:rDT}, we have no constraints on rational Dyck tiles.
Below, we consider the two special classes of rational Dyck tilings in the 
region $\lambda/\mu$.

We consider the following conditions on rational Dyck tiles.
Let $d_1$ and $d_2$ be rational Dyck tiles. 
\begin{enumerate}[(I)]
\item Then, if we move $d_1$ by 
$(1,-1)$, then it is contained by another Dyck tile $d_2$ or below 
the path $\lambda$.

\item If there exists a box of $d_1$ N, W, or NW of a box $d_2$,
then all boxes N, W, or NW of a box of $d_2$ belong to 
$d_1$ or $d_2$.
\end{enumerate}

Roughly speaking, the condition (I) means the sizes of rational Dyck tiles
are weakly decreasing from south-east to north-west direction.
On the other hand, the condition (II) means that the sizes of tiles 
are strictly increasing from south-east to north-west direction.

\begin{defn}
A rational Dyck tiling is said to be cover-inclusive (resp. cover-exclusive)
if and only if all rational Dyck tiles satisfy the condition (I) (resp. (II)).
\end{defn}

Figure \ref{fig:dt} shows two examples of cover-inclusive $(2,3)$-Dyck tilings. 
\begin{figure}[ht]
\tikzpic{-0.5}{[scale=0.5]
\draw[very thick](0,0)--(0,1)--(1,1)--(1,3)--(4,3)--(4,4)--(6,4);
\draw(0,1)--(0,3)--(1,3)(0,2)--(1,2);
\draw(2,3)--(2,4)--(4,4)(3,3)--(3,4);
\draw[red](0,0)--(6,4);
}
\qquad
\tikzpic{-0.5}{[scale=0.5]
\draw[very thick](0,0)--(0,1)--(1,1)--(1,3)--(4,3)--(4,4)--(6,4);
\draw(0,1)--(0,4)--(4,4);
\draw[red](0,0)--(6,4);
}
\caption{Two cover-inclusive $(2,3)$-Dyck tilings}
\label{fig:dt}
\end{figure}
The tiling in the left picture consists of four single boxes.
The tiling in the right picture contains a $(2,3)$-Dyck tile 
of size one.

We define three statistics $\mathrm{tiles}$, $\mathrm{area}$ and $\mathrm{art}$ 
on a rational Dyck tile $d$.

Suppose that a rational Dyck tile $d$ is characterized by an $(a,b)$-enlarged 
Dyck path of size $n$.
Then, we define
\begin{align*}
\mathrm{tiles}(d)&:=1, \\
\mathrm{area}(d)&:=an+bn+1,\\
\mathrm{art}(d)&:=(a\cdot\mathrm{tiles}(d)+b\cdot\mathrm{area}(d))/(a+b), \\
&=bn+1.
\end{align*}

\begin{remark}
The statistics $\mathrm{tiles}$ counts the number of tiles forming a rational Dyck tile $d$, which 
is one, $\mathrm{area}$ counts the number of boxes forming $d$.
One may define the statistics $\mathrm{art}$ by 
\begin{align*}
\mathrm{art}(d)&:=(b\cdot\mathrm{tiles}(d)+a\cdot\mathrm{area}(d))/(a+b), \\
&=an+1.
\end{align*}
Since we have a natural bijection between $(a,b)$-Dyck tilings and $(b,a)$-Dyck tilings
by reflecting the picture along the line $y=-x$, 
one can choose one of the definitions of $\mathrm{art}$
without loss of generality.
\end{remark}

Let $\mathcal{D}$ be a $(a,b)$-Dyck tiling. 
\begin{defn}
We define the weight of $\mathcal{D}$ as 
\begin{align*}
\mathrm{wt}(\mathcal{D})=\sum_{d\in\mathcal{D}}\mathrm{art}(d),
\end{align*}
where $d$ is a Dyck tile in $\mathcal{D}$.
\end{defn}

\begin{example}
The weights of two $(2,3)$-Dyck tilings in Figure \ref{fig:dt}
are both four.	
\end{example}

Let $\mathfrak{D}^{(a,b)}(\lambda)$ be the set of rational cover-inclusive Dyck 
tiling above $\lambda$.
\begin{defn}
The generating function $\mathfrak{Z}^{(a,b)}(\lambda)$  of rational Dyck 
tilings above $\lambda$ is
defined as 
\begin{align*}
\mathfrak{Z}^{(a,b)}(\lambda)
=\sum_{\mathcal{D}\in\mathfrak{D}(\lambda)}q^{\mathrm{wt}(\mathcal{D})},
\end{align*}
where $q$ is an indeterminate.
\end{defn}
%%%%%%%%%%
\begin{remark}
The generating function $\mathfrak{Z}^{(a,b)}(\lambda)$ can not be 
expressed in terms of $q$-integers.
For example, we have
\begin{align*}
\mathfrak{Z}^{(2,3)}(NEN^2E^3NE^2)=1+2q+3q^2+3q^3+3q^4+q^5+q^6.
\end{align*}
However, $\mathfrak{Z}^{(1,1)}(\lambda)$ can be expressed in terms 
of $q$-integers in a simple form. See \cite{KW11,K12,KMPW12,S19} for details.
\end{remark}

\subsection{Hermite histories and Cover-inclusive Dyck tilings}
\label{sec:Hh}
In this subsection, we summarize some properties of cover-inclusive Dyck 
tiling above a Dyck path $\mathfrak{D}_{n}^{(1,1)}$ following \cite{KMPW12,S19}.

Let $p$ be a $(1,1)$-Dyck path and $\mathcal{D}(p)$ be a Dyck tiling
above $p$. 
We denote by $q$ the top path of $\mathcal{D}(p)$.
Let $d$ be a Dyck tile in $\mathcal{D}(p)$. 
We call the right-most edge of $d$ {\it entry} and 
the vertical edge at the bottom of $d$ {\it exit}.
We connect the entry and the exit of $d$ by a line.
We call this line a {\it trajectory}.
We concatenate trajectories of $\mathcal{D}(p)$ 
if and only if it the entry of a Dyck tile is attached 
to the exit of another Dyck tile.

Note that a concatenated trajectory may start from a $N$-step $s'_{N}$
is a Dyck path $p'$ which is above $p$.
In this case, we say this trajectory is attached to the 
$N$ step $s_{N}$ in $p$ such that $s_N$ is obtained from $s'_N$ 
by moving $s'_{N}$ in the $(1,-1)$-direction.
We say such a trajectory  is attached to the 
up step $N$ in $p$.
We have an obvious bijection between a trajectory and a $N$ step in $p$.
The set of trajectories is called an {\it Hermite history}.

Let $S_N$ be a $N$ step in $p$.
We denote by $l(S_N)$ the sum of the size of Dyck tiles and 
the number of Dyck tiles on the trajectory attached to the 
step $S_{N}$. 
Here, the size of a Dyck tile is the size of Dyck path 
characterizing this Dyck tile. 
Thus, a single box is a Dyck tile of size $0$.

We introduce a {\it chord} of a Dyck path $p$.
Since $p$ consists of $N$'s and $E$'s, and they are balanced, 
we make a pair of $N$ and $E$ next to each other in this order.
Then, by ignoring such pairs, we continue to make pairs.
A pair of $N$ and $E$ obtained in this way is called a chord 
of $p$.

We assign an integer in $[1,n]$ to a chord of $p$ as 
follows.
Let $S_{N}$ be a $N$ step $p$.
We assign $l(S_{N})$ to the chord containing $S_{N}$.

We will define a permutation $\omega'(p)$ from the labeled chords in $p$.
We read the labels on chords of $p$ in the pre-order.
Here, pre-order means that we read the labels from 
the left-most and bottom-most chord, then left chords, and 
right chords. We continue this process until we read all 
the labels on the chords. 
We denote by $\omega(p)$ the inverse of 
the permutation obtained as above.
We say that $\omega(p)$ is obtained by an Hermite history.		

\begin{remark}
The labels on a chord are increasing from upper-left to 
bottom-right for a Dyck tiling obtained by an Hermite history.	
\end{remark}

\begin{example}
\label{ex:Hh}
We consider the two Dyck tilings associated 
to Dyck paths in $\mathfrak{D}_{4}^{(1,1)}$ as below.
%%%%%%
\begin{align*}
\mathcal{D}_1=\tikzpic{-0.5}{[scale=0.5]
\draw[very thick](0,0)--(0,1)--(1,1)--(1,3)--(3,3)--(3,4)--(4,4);
\draw(0,1)--(0,4)--(3,4);
\draw[red](3,3.5)--(0.5,3.5)--(0.5,1.5)--(0,1.5);
\draw[gray](0,0)--(1,1)(1,1)--(4,4)(1,2)--(2,3);
\draw(0.5,0.5)node{$3$}(1.5,2.5)node{$1$}
(2,2)node{$2$}(3.5,3.5)node{$4$};
},\quad
\mathcal{D}_2=\tikzpic{-0.5}{[scale=0.5]
\draw[very thick](0,0)--(0,3)--(2,3)--(2,4)--(4,4);
\draw(0,3)--(0,4)--(2,4)(1,4)--(1,3);
\draw[red](2,3.5)--(0,3.5);
\draw[gray](0,2.5)--(0.5,3)(0,1.5)--(1.5,3)
(0,0.5)--(3.5,4)(2,3.5)--(2.5,4);
\draw(1.75,2.25)node{$4$}(0.25,2.75)node{$1$}
(0.75,2.25)node{$2$}(2.25,3.75)node{$3$};
}
\end{align*}
%%%%%%%
The pre-order words for these two Dyck tilings are 
$3214$ and $4213$. 
Thus, $\omega(\mathcal{D}_1)=3214$ and $\omega(\mathcal{D}_2)=3241$.
\end{example}

Let $w$ be a permutation on the alphabets $[1,n]$.
We construct a non-negative integer sequence 
$\mathfrak{h}(w):=(h_1,\ldots,h_n)$ as follows.
Let $w_i$ be a permutation consisting of integers $[1,i]$ in $w$.
We define $h_{j}$ by the position of $j$ in $w_j$ from left 
minus one.
For example, we have $\mathfrak{h}(w)=(0,1,0,2)$ if $w=3142$.
%%%%%
\begin{defn}
\label{defn:insh}
We call $\mathfrak{h}(w)$ the insertion history of a permutation $w$.
\end{defn}

Let $\mathfrak{u}(p)$ be a step sequence of the Dyck path $p$, 
and $\mathcal{D}$ be a Dyck tiling above $p$ without non-trivial
Dyck tiles.
We denote by $q$ the top Dyck path in $\mathcal{D}$.

\begin{prop}
\label{prop:ssforq}
The integer sequence $\mathfrak{u}(p)-\mathfrak{h}(\omega(\mathcal{D}))$ 
is the step sequence of $q$.
\end{prop}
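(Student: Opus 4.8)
The plan is to argue by induction on the number $m$ of boxes of $\mathcal{D}$ --- all of them single boxes, since $\mathcal{D}$ has no non-trivial Dyck tiles --- peeling off one box at a time and following in parallel the step sequence $\mathfrak{u}(q)$ of the top path, the Hermite permutation $\omega(\mathcal{D})$, and its insertion history $\mathfrak{h}(\omega(\mathcal{D}))$.

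For the base case $m=0$ the tiling is empty and $q=p$: every concatenated trajectory is trivial, and one checks from the definitions that $\omega(\mathcal{D})$ is the longest permutation $n(n-1)\cdots 21$. For this permutation the letter $j$ is leftmost among $1,\dots,j$ for each $j$, so $\mathfrak{h}(\omega(\mathcal{D}))$ is the zero sequence and the asserted identity reduces to $\mathfrak{u}(p)=\mathfrak{u}(q)$, which holds.

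For the inductive step, fix a box $b$ of $\mathcal{D}$ at an inner corner of the top path $q$. Then $q':=q\setminus\{b\}$ is again a Dyck path lying above $p$, the tiling $\mathcal{D}':=\mathcal{D}\setminus\{b\}$ is the single-box tiling of the region between $p$ and $q'$, and passing from $q'$ to $q$ moves exactly one north step one unit to the right; hence $\mathfrak{u}(q)$ and $\mathfrak{u}(q')$ agree except in one coordinate, where they differ by one. On the other side, removing $b$ shortens by one box exactly one concatenated trajectory --- the one whose first box is $b$ --- so exactly one of the labels $l(S_N)$ decreases by one. Chasing this through the pre-order reading of the labelled chords of $p$ and through the inversion that produces $\omega$, one finds that $\omega(\mathcal{D})$ differs from $\omega(\mathcal{D}')$ by a single adjacent transposition, so that $\mathfrak{h}(\omega(\mathcal{D}))$ and $\mathfrak{h}(\omega(\mathcal{D}'))$ also agree except in one coordinate, where they differ by one. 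Applying the induction hypothesis to $\mathcal{D}'$, it remains only to check that the coordinate where $\mathfrak{u}(q)$ changes is the same as the coordinate where $\mathfrak{h}(\omega(\mathcal{D}))$ changes.

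That matching is the heart of the matter, and the step I expect to be the main obstacle. It requires fixing one coherent indexing of the north steps of $q$ (the one implicit in $\mathfrak{u}(q)$), the chords of $p$ listed in pre-order, and the entries of $\omega(\mathcal{D})$, and then verifying that the local modification produced by $b$ acts in matching positions on all three. The ingredients are a precise description, for a concatenated trajectory built from single boxes, of the north step of $p$ it is anchored to and the north step of $q$ at which it terminates, together with a dictionary translating ``position along such a trajectory'' into ``position in $\omega(\mathcal{D})$'' through the pre-order reading and the inverse. Establishing this dictionary, along with the subsidiary point that deleting $b$ yields an \emph{adjacent} transposition of $\omega$ rather than a longer cycle, is where the work concentrates. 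As an independent check, and to pin down the conventions used above, I would also verify the formula directly, by analysing for each concatenated trajectory which north steps of $p$ and of $q$ it joins and expressing the displacement between them in terms of $l(S_N)$, and then reassembling these displacements according to the pre-order listing of the chords of $p$ to recover $\mathfrak{u}(p)-\mathfrak{h}(\omega(\mathcal{D}))=\mathfrak{u}(q)$.
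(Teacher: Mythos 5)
Your strategy coincides with the paper's: induct on the number of single boxes, start from the empty tiling where $\omega$ is the longest permutation and $\mathfrak{h}$ vanishes, and show that adding (or, in your direction, deleting) one box changes $\mathfrak{u}(q)$ and $\mathfrak{h}(\omega(\mathcal{D}))$ each in a single coordinate by one. The difficulty is that you explicitly stop at what you yourself call the heart of the matter: you neither establish that the change in $\omega$ is a transposition of \emph{consecutive values}, nor identify \emph{which} coordinate of $\mathfrak{h}$ changes, nor match it to the coordinate where the step sequence of $q$ changes. As written, the inductive step is therefore incomplete.

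The paper closes exactly this hole with two observations you would need to reproduce. First, extending a trajectory by one box amounts, in the Hermite-history labelling, to exchanging the label $l$ on a chord $c_1$ with the label $l'$ on a chord $c_2$ to its left, where $l'$ is the minimal label exceeding $l$; the hypothesis that $\mathcal{D}$ has no non-trivial Dyck tiles forces $c_2$ to be the chord immediately to the left of $c_1$, and the monotonicity of Hermite-history labels (increasing from upper-left to lower-right) then forces $l'=l+1$, so $\omega(\mathcal{D})$ is obtained from $\omega(\mathcal{D}')$ by swapping the consecutive values $m$ and $m+1$ with no smaller value between them. Second, by the definition of the insertion history in Definition \ref{defn:insh}, such a swap increases precisely the $(m+1)$-th entry of $\mathfrak{h}$ by one, and this index is the one at which the north step of the top path moves when the box is added. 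Your proposed ``independent check'' via displacements along trajectories could in principle substitute for this, but it is only sketched; without one of these two arguments carried out, the coordinate-matching step remains an assertion rather than a proof.
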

%%%%%%%%%%
\begin{proof}
We prove the statement by induction. 
We first consider a unique Dyck tiling $\mathcal{D}_{0}$ such that the top path 
and the bottom path are both $p$.
By construction of an Hermite history, $\omega_0:=\omega(\mathcal{D}_{0})$ is 
the permutation with maximal inversions, {\it i.e.}, 
$\omega_{0}=(n,n-1,\ldots,1)$.
The insertion history for $\omega_0$ is $(0,\ldots,0)$, which implies
$\mathfrak{u}(p)-\mathfrak{h}(\omega_0)=\mathfrak{u}(p)$. 
Since $q=p$, $\mathfrak{u}(p)$ gives the step sequence of $q$.

We assume that the statement is true up to all path $q'$ below $q$.
The Dyck tiling $\mathcal{D}$ above $p$ and below $q$ is obtained from some Dyck 
tiling $\mathcal{D}'$ above $p$ and below $q'$ by adding a single box.
This addition comes from the fact that $\mathcal{D}$ and $\mathcal{D}'$ have 
no non-trivial Dyck tiles.
The addition of boxes to $\mathcal{D}'$ implies the following operation 
on the labels on chords of $p$.
The addition of a single box results in an extension of a trajectory 
by a single box. Without loss of generality, we assume that the trajectory
associated to a chord $c_1$ is extended.
Let $l$ be the label on $c_1$ for $\mathcal{D}'$. 
Then, addition of a single box is equivalent to changing the label $l$ on $c_1$ 
and a label $l'$ on some $c_2$ such that the chord $c_2$ is left to $c_1$ and 
$l'$ is minimal label larger than $l$.
Since we have no non-trivial Dyck tiles, the chord $c_2$ should be left next to $c_{1}$.
Thus, the permutation $\omega(\mathcal{D})$ is obtained from $\omega(\mathcal{D}')$ 
by exchanging some integer $m$ and $m+1$. 
The condition that labels on chords are increasing from a left-top chord to 
a right-bottom chord insures that there is no integer $m'<m$ between 
$m$ and $m+1$ in $\omega(\mathcal{D})$. 
Then, the exchange of $m$ and $m+1$ in $\omega(\mathcal{D}')$ means that 
the $m+1$-th entry of $\mathfrak{h}(\omega(\mathcal{D}'))$ is increased 
by one.
From these observations, the step sequence of $q$ is also expressed 
as $\mathfrak{u}(p)-\mathfrak{h}(\omega(\mathcal{D}))$.
This completes the proof.
\end{proof}

\subsection{Dyck tiling strip and cover-inclusive Dyck tilings}
\label{sec:DTS}
In Section \ref{sec:Hh}, we introduce an Hermite history, and 
construct a permutation $\omega(p)$ from a Dyck tiling $p$.
In this subsection, we introduce another construction of 
a permutation, called {\it Dyck tiling strip} (DTS for short) 
following \cite{KMPW12,S19}. 

Let $p$ be a Dyck path of size $n-1$ and $\mathcal{D}$ be 
a Dyck tiling above $p$.
First, we define an operation called {\it spread} of $\mathcal{D}$ 
at $y=-x+m$ for $0\le m\le 2(n-1)$.
We divide $\mathcal{D}$ into two pieces by the line $y=-x+m$.
Then, we move the up-right piece by $(1,1)$-direction.
By reconnecting two pieces by $NE$ steps, we obtain 
a new Dyck tiling of size $n$.

In a spread, if the line $y=-x+m$ passes through a Dyck tile,
the size of the Dyck tile is increased by one.
Figure \ref{fig:sp} is an example of a spread of a Dyck tiling
at $y=-x+3$. 
\begin{figure}[ht]
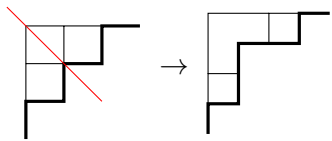

\begin{align*}
\tikzpic{-0.5}{[scale=0.5]
\draw[very thick](0,0)--(0,1)--(1,1)--(1,2)--(2,2)--(2,3)--(3,3);
\draw(0,1)--(0,3)--(2,3)(0,2)--(1,2)--(1,3);
\draw[red](-0.5,3.5)--(2,1);
}\rightarrow
\tikzpic{-0.5}{[scale=0.4]
\draw[very thick](0,0)--(0,1)--(1,1)--(1,3)--(3,3)--(3,4)--(4,4);
\draw(0,1)--(0,2)--(1,2)(2,3)--(2,4)--(3,4)(0,2)--(0,4)--(2,4);
}
\end{align*}
\caption{A spread at $y=-x+3$ of a Dyck tiling}
\label{fig:sp}
\end{figure}

Let $\mathcal{D}'$ be a new Dyck tiling of size $n$ after 
a spread of $\mathcal{D}$.
We perform an addition of boxes as follows.
We attach a single box to each $N$ step in $\mathcal{D}$ such 
that the $N$ step is right to the line $y=-x+m+1$.
We denote by $\mathcal{D}_{new}$ the new Dyck tiling 
obtained from $\mathcal{D}'$.
This process is called {\it right strip-growth}.
Similarly, if we attach boxes to each $E$ step in $\mathcal{D}$
such that the $E$ step is left to the line $y=-x+m+1$.
This process is called {\it left strip-growth}.

Since a spread increases the number of chords for the path $p$
by one, we have a new added chord. 
Then, we assign a label $n$ to the newly added chord.
In this way, we obtain labels on chords for a Dyck tiling $\mathcal{D}_{new}$.
We call these processes to obtain $\mathcal{D}_{new}$ 
from $\mathcal{D}$ {\it Dyck tiling strip}. 

\begin{defn}
We say Dyck tiling strip is right (resp. left) Dyck tiling strip if we perform 
right (resp. left) strip-growths.
If we do not specify right or left, a Dyck tiling strip means right Dyck tiling 
strip.
\end{defn}

Given a Dyck tiling $\mathcal{D}$, we have a label on chords 
of $p$.
We denote by $\nu(\mathcal{D})$ the word read by the post-order.
Here, post-order means that we read the labels of chords by the 
following order: 1) the left chords, 2) the right chords, then 
3) the bottom chords.
%%%%%%%%%%%%%%%
\begin{example}
\label{ex:DTS}
We consider the same Dyck tilings as Example \ref{ex:Hh}.
\begin{align*}
\mathcal{D}_1=\tikzpic{-0.5}{[scale=0.5]
\draw[very thick](0,0)--(0,1)--(1,1)--(1,3)--(3,3)--(3,4)--(4,4);
\draw(0,1)--(0,4)--(3,4);
\draw[red](3,3.5)--(0.5,3.5)--(0.5,1.5)--(0,1.5);
\draw[gray](0,0)--(1,1)(1,1)--(4,4)(1,2)--(2,3);
\draw(0.5,0.5)node{$2$}(1.5,2.5)node{$4$}
(2,2)node{$3$}(3.5,3.5)node{$1$};
},\quad
\mathcal{D}_2=\tikzpic{-0.5}{[scale=0.5]
\draw[very thick](0,0)--(0,3)--(2,3)--(2,4)--(4,4);
\draw(0,3)--(0,4)--(2,4)(1,4)--(1,3);
\draw[red](2,3.5)--(0,3.5);
\draw[gray](0,2.5)--(0.5,3)(0,1.5)--(1.5,3)
(0,0.5)--(3.5,4)(2,3.5)--(2.5,4);
\draw(1.75,2.25)node{$1$}(0.25,2.75)node{$4$}
(0.75,2.25)node{$3$}(2.25,3.75)node{$2$};
}
\end{align*}
The post-order words for $\mathcal{D}_1$ and $\mathcal{D}_2$
are $\nu(\mathcal{D}_1)=2431$ and $\nu(\mathcal{D}_2)=4321$ respectively.
\end{example}

\begin{remark}
The labels on chords are decreasing from left-top to bottom-right 
for a Dyck tiling obtained by the DTS bijection.
\end{remark}

Labels on chords in an Hermite history and those in the DTS bijection
are related as below.
%%%%%%%%%%%%
\begin{prop}
\label{prop:dualHhDTS}
Let $\mathcal{D}$ be a Dyck tiling above $p$, and 
$h_1(c)$ (resp. $h_2(c)$) are a label of a chord $c$ in $p$ 
obtained from $\mathcal{D}$ for an Hermite history 
(resp. DTS bijection).
Then, we have 
\begin{align}
\label{eqn:dualHhDTS}
h_1(c)+h_{2}(c)=n+1.
\end{align}
\end{prop}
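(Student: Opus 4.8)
The plan is to establish the identity $h_1(c) + h_2(c) = n+1$ by comparing how both labelling schemes treat a fixed chord $c$, exploiting the fact that the two constructions are "left–right mirror images" of each other on the same Dyck tiling. Concretely, for a chord $c$ containing the $N$-step $S_N$, the Hermite-history label $h_1(c) = l(S_N)$ counts the total size-plus-number of tiles on the trajectory attached to $S_N$, which is a cumulative statistic built by reading tiles from the $(1,-1)$-direction upward. The DTS label $h_2(c)$, on the other hand, is the value $n$ that the chord $c$ received at the spread step that created it, adjusted downward by subsequent spreads; equivalently it records "how late" the chord $c$ is born in the DTS building sequence. The key claim I would isolate is that these two quantities are complementary because each unit contributing to $l(S_N)$ — i.e. each tile on the trajectory of $S_N$, weighted by size — corresponds bijectively to a spread step that does \emph{not} create $c$ but whose line passes through (and hence enlarges) a tile on $c$'s trajectory, while the spreads counted by $h_2(c)$ are exactly the complementary ones.

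The cleanest route is probably induction on $n$, paralleling the proof of Proposition~\ref{prop:ssforq}, and also exploiting the duality between Hermite histories and the DTS bijection as used in \cite{KMPW12,S19}. First I would handle the base case: for the unique tiling $\mathcal{D}_0$ with top path $= $ bottom path $= p$ (the maximal-inversion configuration), one checks $\omega(\mathcal{D}_0) = (n, n-1, \ldots, 1)$ gives $h_1(c) = n+1 - (\text{position of } c)$ in the natural chord order, while the DTS construction with empty tiling and successive spreads assigns $h_2(c) = (\text{position of } c)$, so the two sum to $n+1$ chord by chord. Then, in the inductive step, I would add a single box to pass from $\mathcal{D}'$ to $\mathcal{D}$ and track the effect on both labellings: as established in the proof of Proposition~\ref{prop:ssforq}, adding a box exchanges two labels $m, m+1$ in $\omega$, hence increments $h_1$ on one chord by exactly $1$; I would then show the \emph{same} box addition increments $h_2$ on the \emph{other} of those two chords by $1$ and leaves all other $h_2$-values unchanged, or — more robustly — that it decrements $h_1 + h_2$ by nothing, i.e. the pair $(h_1, h_2)$ of the two affected chords is simply swapped in one coordinate each, preserving all the sums $h_1(c)+h_2(c)$.

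The main obstacle I anticipate is the bookkeeping for \emph{which} chord's DTS label changes under a single-box addition, and in which direction. The Hermite-history side is already done for us in Proposition~\ref{prop:ssforq}, but the DTS side requires understanding how inserting a box interacts with the spread/strip-growth sequence that recovers $\mathcal{D}$ — this is where I would lean on the known fact (from \cite{KMPW12,S19}, cf. the two Remarks about labels being "increasing from upper-left to bottom-right" for Hermite histories and "decreasing" for DTS) that the DTS labelling is literally the $\mapsto n+1-(\cdot)$ reflection of a suitably-oriented Hermite history. If that reflection statement is available in the cited references in the precise form needed, the proposition follows almost immediately by specializing it to each chord; if not, I would prove it by the induction above, and the delicate point will be verifying that the single-box-addition move is compatible with both constructions in the mirrored way — essentially a local check that the "entry/exit" trajectory picture and the "spread line" picture are exchanged under left–right reflection of the tiling region.
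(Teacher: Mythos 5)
Your proposal is correct and follows essentially the same route as the paper: induction on the top path with the trivial base case $p=q$, reusing the single-box-addition analysis from Proposition~\ref{prop:ssforq}, and observing that each box addition raises the Hermite-history label and lowers the DTS label on the corresponding chord so that the sum $h_1(c)+h_2(c)$ is preserved. The only caveat is your first formulation of the inductive step (incrementing $h_2$ on the \emph{other} chord) points the wrong way, but your ``more robust'' restatement — that the two affected chords each have one coordinate of $(h_1,h_2)$ adjusted so that both sums are unchanged — is exactly the paper's argument.
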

%%%%%%%%%%
\begin{proof}
We prove the proposition by induction.
For a Dyck tiling $\mathcal{D}$, we denote by $q$ the top 
path.
The statement is obvious when $p=q$.
We assume that Eqn. (\ref{eqn:dualHhDTS}) holds for all $q'$ 
below $q$.

As in the proof of Proposition \ref{prop:ssforq}, we extend a 
trajectory by length one in the Hermite history.
On the other hand, the strip-growth in the DTS bijection corresponds to 
extend a trajectory by one.
Further, the labels on chords in the Hermite history are increasing 
from left-top chords to right-bottom chords, while the labels in the 
DTS bijection are increasing from left-top to right-bottom.
Thus, the extension of a trajectory increases a label in the Hermite 
history and decrease the corresponding label in the DTS bijection. 
From these observations, we have Eqn. (\ref{eqn:dualHhDTS}).
\end{proof}

\section{Incidence matrix}
\label{sec:IMat}
Let $\pi:=\pi_1\ldots\pi_{r}$ be a path of length $r$ consisting of $N$'s and $E$'s.
Suppose that $\pi_{i}=N$ and $\pi_{j}=E$ for $1\le i<j\le r$. 
We say that $(i,j)$ is $(a,b)$-admissible if and only if
the partial path $\pi':=\pi_{i+1}\ldots\pi_{j-1}$ 
is an $(a,b)$-enlarged Dyck path, and 
$\pi'$ cannot be written as a concatenation of two $(a,b)$-enlarged 
Dyck paths.

Suppose that $\pi_{i}=N$ in $\pi$. Then, by definition of 
$(a,b)$-admissibility, we have at most one $j$ such that 
the pair $(i,j)$ is $(a,b)$-admissible.
Note that if $\pi_{i}=N$ and $\pi_{i+1}=E$, the pair $(i,i+1)$ is 
$(a,b)$-admissible for any pair of $a$ and $b$.

We define the operation, which we call $NE$-flipping, on a path $\pi$.
Suppose that the pair $(i,j)$ is $(a,b)$-admissible in $\pi$.
The $NE$-flip is defined to exchange $\pi_i$ and $\pi_j$, namely, 
we have a new path $\pi'$ such that 
$\pi'_i=E$, $\pi'_j=N$ and $\pi'_{k}=\pi_{k}$ for $k\neq i,j$.
We denote this relation by $\pi'\leftarrow\pi$, or equivalently
$\pi\rightarrow\pi'$.

Let $\mathcal{F}(\pi)$ be the set of $(a,b)$-admissible pairs in $\pi$, and 
$f_{i}\in\mathcal{F}(\pi)$ for $1\le i\le |\mathcal{F}(\pi)|$.
We consider the sequence 
\begin{align}
\label{eqn:chainflip}
\pi\stackrel{i_1}{\rightarrow}\pi_{1}\stackrel{i_2}{\rightarrow}\pi_{2}\ldots
\stackrel{i_p}{\rightarrow}\pi_{p},
\end{align} 
with $1\le i_1<i_2<\ldots<i_p\le|\mathcal{F}(\pi)|$.
We denote by $\pi_{j}\stackrel{i_{j+1}}{\rightarrow}\pi_{j+1}$ the 
$NE$-flip by the pair$f_{i_{j+1}}$.

As already mentioned above, we have no chance to have two $(a,b)$-admissible 
pairs $(i,j)$ and $(i,j')$ with $j\neq j'$ or $(i,j)$ and $(i',j)$ with $i\neq i'$.
Thus, we can choose the order of $i_j$ with $1\le j\le p$ as above. 

Given two paths $\pi$ and $\pi'$, we denote by $\pi\rightarrow \pi'$ 
if there exists a sequence (\ref{eqn:chainflip}) with $\pi_{p}=\pi'$.

We introduce two types of weight which is given to a $NE$-flip.
We define the weight of type $I$ of a $NE$-flip as $\mathrm{wt}^{I}(\pi'\leftarrow\pi)=-q$ where 
$\pi'$ is obtained by a single $NE$-flip from $\pi$.
Then, in general, we define $\mathrm{wt}^{I}(\pi'\leftarrow\pi)=(-q)^{p}$ when 
$\pi'\leftarrow\pi$ as a sequence (\ref{eqn:chainflip}).

Let $(i,j)\in\mathcal{F}(\pi)$. 
Then, we denote by $n_{E}$ the number of $E$'s  in the partial 
path $\pi_{i+1}\ldots\pi_{j-1}$.
The weight of type $II$ of a $NE$-flip is given by 
$\mathrm{wt}^{II}(\pi'\leftarrow\pi)=-q^{n_{E}}$.
For general two paths $\pi$ and $\pi'$, the weight 
$\mathrm{wt}^{II}(\pi'\leftarrow\pi)$ is given by the 
product of the weights of $NE$-flips.

For both types $I$ and $II$, the weight $\mathrm{wt}^{X}(\pi'\leftarrow\pi)$ with $X=I$ or $II$ 
is given by the monomial of $q$.
When there exists no sequence (\ref{eqn:chainflip}) starting from $\pi$ and ending with $\pi'$,
we define $\mathrm{wt}^{X}(\pi'\leftarrow\pi)=0$. 

We define two types of incidence matrices as follows.

\begin{defn}
The matrices $M$ and $N$ are defined by 
\begin{align*}
M_{\pi',\pi}&=\mathrm{wt}^{I}(\pi'\leftarrow\pi),\\
N_{\pi',\pi}&=\mathrm{wt}^{II}(\pi'\leftarrow\pi).
\end{align*}
\end{defn}

\begin{prop}
The entry $M_{\pi',\pi}$ (resp. $N_{\pi',\pi}$)
gives the cover-exclusive $(a,b)$-Dyck tiling in the region 
above $\pi'$ and below $\pi$.
The weight of a tile is given by the statistics $\mathrm{tiles}$ 
(resp. $\mathrm{art}$).
\end{prop}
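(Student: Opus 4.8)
The plan is to show that the combinatorial data recorded by a single $NE$-flip, and then by a chain of $NE$-flips, matches exactly the data of a cover-exclusive $(a,b)$-Dyck tile, respectively a cover-exclusive $(a,b)$-Dyck tiling. First I would treat the base case of one $NE$-flip. Suppose $(i,j)\in\mathcal{F}(\pi)$, so that the partial path $\pi_{i+1}\ldots\pi_{j-1}$ is an $(a,b)$-enlarged Dyck path $p^{(a,b)}$ that is not a concatenation of two such paths; let $n$ be the size of the underlying $(1,1)$-Dyck path $p$. Flipping $\pi_i$ and $\pi_j$ replaces a configuration $N\,p^{(a,b)}\,E$ by $E\,p^{(a,b)}\,N$, and I claim the region between $\pi'$ (below) and $\pi$ (above — recall our convention $\lambda\le\mu$ means $\lambda\supseteq\mu$, so $\pi$ is the lower path here, $\pi'$ the upper) is precisely a single $(a,b)$-Dyck tile characterized by $p^{(a,b)}$. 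Indeed the skew region sits on top of the arc from the $N$-step to the $E$-step; because $\pi_{i+1}\ldots\pi_{j-1}$ is an enlarged Dyck path, the boxes between the two copies of this sub-path form a ribbon whose box-centers trace out $p^{(a,b)}$, which is exactly the definition of an $(a,b)$-Dyck tile of size $n$. The irreducibility condition (not a concatenation of two enlarged Dyck paths) is what guarantees the ribbon is connected, i.e. a single tile rather than several. For this single tile I would then check the two weight claims: for type $I$, $\mathrm{wt}^{I}=-q$ matches $q^{\mathrm{tiles}(d)}=q^{1}$ up to the sign, and for type $II$, $\mathrm{wt}^{II}=-q^{n_E}$ where $n_E$ counts the $E$'s in $\pi_{i+1}\ldots\pi_{j-1}$; since each $E$ in $p$ becomes $E^{b}$ in $p^{(a,b)}$, we have $n_E=bn$, so $-q^{n_E}=-q^{bn}$, matching $q^{\mathrm{art}(d)-1}=q^{bn}$ up to sign.

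Next I would handle a chain of flips as in \eqref{eqn:chainflip}. The key structural observation is that, because no index $i$ occurs in two distinct admissible pairs, the flips in a chain act on disjoint "windows" in a compatible way, and that each flip $\pi_{k}\to\pi_{k+1}$ contributes, in the region between $\pi'=\pi_p$ and $\pi=\pi_0$, a tile that sits strictly north-west of the tiles created by earlier flips. So the whole region decomposes as a disjoint union of $p$ ribbons, one per flip, and I must verify (a) that this is a tiling of the full skew region $\pi'/\pi$, i.e. the ribbons exactly fill it with no gaps or overlaps, and (b) that the resulting tiling satisfies condition (II), the cover-exclusive condition. For (b) the point is that when flip $f_{i_{k+1}}$ is applied at a larger index than $f_{i_k}$, its window lies strictly to the left, hence the tile it produces is strictly larger and is positioned so that every box N, W, or NW of a box of the earlier tile belongs to one of the two tiles — this is precisely the "strictly increasing from SE to NW" reading of (II). Conversely, given any cover-exclusive tiling of a skew region $\pi'/\pi$, condition (II) lets me peel off the inner-most (south-east-most) tile, which I would identify with an irreducible admissible pair, and induct, recovering a unique chain \eqref{eqn:chainflip}. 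This establishes the bijection between nonzero entries $M_{\pi',\pi}$ (resp. $N_{\pi',\pi}$) and cover-exclusive $(a,b)$-Dyck tilings of $\pi'/\pi$.

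Finally the weight bookkeeping: for a chain of $p$ flips, $\mathrm{wt}^{I}(\pi'\leftarrow\pi)=(-q)^{p}=(-1)^{p}q^{\sum_{k}\mathrm{tiles}(d_k)}$, which up to sign is $q^{\mathrm{wt}}$ with the $\mathrm{tiles}$ statistic, and $\mathrm{wt}^{II}(\pi'\leftarrow\pi)=\prod_{k}(-q^{n_E(f_{i_k})})=(-1)^{p}q^{\sum_k b n_k}=(-1)^{p}q^{\sum_k(\mathrm{art}(d_k)-1)}$, matching the $\mathrm{art}$ statistic up to the overall sign $(-1)^{p}$ and a shift by $p$. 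I would remark that the sign and the shift are exactly the normalization expected for an incidence matrix whose inverse (transposed) yields the cover-inclusive generating functions, consistent with the duality of $P^{\pm}_{\lambda,\mu}$ discussed in the introduction. The main obstacle I anticipate is step (a)–(b) of the chain argument — proving that the ribbons from a maximal (or arbitrary) chain of flips tile the skew region and that the tiling is cover-exclusive, and dually that every cover-exclusive tiling arises this way with a uniquely determined chain. This requires a careful analysis of how admissible windows nest and how condition (II) constrains adjacency of consecutive tiles; once the "peel off the SE-most tile" induction is set up cleanly, the weight identities are immediate from the single-flip computation above.
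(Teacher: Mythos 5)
Your proposal is correct and follows essentially the same route as the paper's own (very terse) proof: each admissible pair corresponds to a single $(a,b)$-Dyck tile in the skew region, the non-crossing of the arcs joining flipped $N$--$E$ pairs forces the resulting tiling to be cover-exclusive, and the weight of a chain is the product of the single-flip weights. The paper simply asserts these facts in three sentences, whereas you spell out the single-tile identification, the chain argument, and the converse ``peel off the south-east-most tile'' induction; none of that extra detail is wrong, and the converse direction is a genuine gap in the paper's argument that you fill. One remark: the off-by-one you flag for type $II$ is real with the definitions as literally stated --- since $n_E$ counts the $E$'s of $\pi_{i+1}\ldots\pi_{j-1}$ one gets $n_E=bn=\mathrm{art}(d)-1$, e.g.\ a single box contributes $-q^{0}$ rather than $-q^{1}$ --- so for the proposition to hold on the nose the definition of $n_E$ should include the flipped step $\pi_j$; this is a defect of the paper's normalization, not of your argument.
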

%%%%%%%%%%
\begin{proof}
By definition, the entry $\pi'$ is obtained from $\pi$ by exchanging 
an $N$ and an $E$ in $\pi$. If we connect a pair of these $N$ and $E$ by 
a arc in $\pi$, arcs never intersect. In terms of a Dyck tiling,  
it is clear that $\pi'$ is obtained from $\pi$ by a cover-exclusive tiling.
Since the weight of $NE$-flip is a monomial of $q$, 
the weight of a tiling above $\pi'$ and below $\pi$ is given by the statistic 
$\mathrm{tile}$.

We have a similar proof for $N_{\pi',\pi}$. 
Since the weight of $NE$-flip is equal to the statistic $\mathrm{art}$, 
the weight of a tiling above $\pi'$
and below $\pi$ is given by $\mathrm{art}$.
This completes the proof.
\end{proof}

The inverse matrices of $M$ and $N$ are characterized by $(a,b)$-Dyck tilings
as follows.
\begin{theorem}
The entry $M^{-1}_{\pi',\pi}$ (resp. $N^{-1}_{\pi',\pi}$)
gives the cover-inclusive $(a,b)$-Dyck tiling in the region 
above $\pi'$ and below $\pi$.
The weight of a tile is given by the statistics $\mathrm{tiles}$ 
(resp. $\mathrm{art}$).
\end{theorem}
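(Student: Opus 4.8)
The plan is to produce the inverses explicitly. Let $\widetilde M$ (resp.\ $\widetilde N$) be the matrix whose $(\pi',\pi)$-entry is $\sum_{\mathcal D}q^{\sum_{d\in\mathcal D}\mathrm{tiles}(d)}$ (resp.\ $\sum_{\mathcal D}q^{\sum_{d\in\mathcal D}\mathrm{art}(d)}$), where $\mathcal D$ runs over all cover-inclusive $(a,b)$-Dyck tilings in the region above $\pi'$ and below $\pi$. Combined with the preceding Proposition, which identifies $M_{\pi',\pi}$ and $N_{\pi',\pi}$ with the cover-exclusive tilings of that region, the theorem is equivalent to the two identities $M\widetilde M=I$ and $N\widetilde N=I$, so it suffices to establish these.

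First I would record the triangularity that makes everything invertible. An $NE$-flip swaps an $N$-step with a later $E$-step, so $\pi'\leftarrow\pi$ implies $\pi'\le\pi$; hence $M_{\pi',\pi}$ and $N_{\pi',\pi}$ vanish unless $\pi'\le\pi$, while the empty sequence (\ref{eqn:chainflip}) gives $M_{\pi,\pi}=N_{\pi,\pi}=1$. Cover-inclusive tilings above $\pi'$ and below $\pi$ likewise require $\pi'\le\pi$, and for $\pi'=\pi$ only the empty tiling occurs, so $\widetilde M_{\pi,\pi}=\widetilde N_{\pi,\pi}=1$. Since $NE$-flips preserve the content of a path, each matrix splits into finite blocks, and within a block all four matrices are unitriangular with respect to $\le$, hence invertible with unitriangular inverses; so a one-sided check suffices and it remains only to verify $M\widetilde M=I$ and $N\widetilde N=I$.

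The diagonal entries of $M\widetilde M$ are $1$, so the real content is $\sum_{\pi'\le\sigma\le\pi}M_{\pi',\sigma}\widetilde M_{\sigma,\pi}=0$ for $\pi'<\pi$. By the preceding Proposition, $M_{\pi',\sigma}=(-1)^{t_\sigma}q^{t_\sigma}$ with $t_\sigma$ the number of tiles of the cover-exclusive tiling $\mathcal E_\sigma$ above $\pi'$ and below $\sigma$ when it exists, and $M_{\pi',\sigma}=0$ otherwise; and $\widetilde M_{\sigma,\pi}=\sum_{\mathcal I}q^{\sum_{d\in\mathcal I}\mathrm{tiles}(d)}$ over cover-inclusive tilings $\mathcal I$ above $\sigma$ and below $\pi$. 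Thus the sum is a signed enumeration of pairs $(\mathcal E_\sigma,\mathcal I)$ whose two parts are glued along the path $\sigma$, and I would cancel it by a sign-reversing, degree-preserving involution: in the stacked configuration $\mathcal E_\sigma\cup\mathcal I$ filling the region above $\pi'$ and below $\pi$, pick a canonical $(a,b)$-tile $d$ incident to the seam $\sigma$ — determined by a fixed reading order of the region, in the spirit of the Dyck tiling strip — and reassign $d$ from the cover-exclusive side to the cover-inclusive side, or vice versa. Such a move changes $\sigma$ by exactly one $NE$-flip at an $(a,b)$-admissible pair (whose associated enlarged Dyck path is the characteristic path of $d$); since $d$ is transported intact, the exponent $t_\sigma+\sum_{d\in\mathcal I}\mathrm{tiles}(d)$ is unchanged while $t_\sigma$ changes by one, so the term flips sign. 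For $\pi'<\pi$ the region is nonempty and such a tile always exists, so the involution has no fixed points and the sum vanishes. The identity $N\widetilde N=I$ follows by the same involution with $\mathrm{art}$ in place of $\mathrm{tiles}$: a single flip carries the sign $-1$ in $\mathrm{wt}^{II}$ as well, $\mathrm{art}$ is additive over the tiles of a tiling, and $d$ keeps its shape under the move.

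The main obstacle is the precise construction of the involution: one must choose the canonical tile $d$ so that deleting it from $\mathcal I$ still leaves a cover-inclusive tiling, adjoining it to $\mathcal E_\sigma$ still leaves a cover-exclusive one, and the resulting change of $\sigma$ is genuinely an $NE$-flip at an $(a,b)$-admissible pair — and then check that the assignment is an involution. The rigidity of cover-exclusive tilings (their uniqueness, which is what lets the preceding Proposition speak of \emph{the} cover-exclusive tiling) helps here, since it reduces a triple $(\sigma,\mathcal E_\sigma,\mathcal I)$ to the pair $(\sigma,\mathcal I)$. For $(a,b)=(1,1)$ this is the combinatorics behind the duality of the parabolic Kazhdan--Lusztig polynomials $P^{+}_{\lambda,\mu}$ and $P^{-}_{\lambda,\mu}$; the only genuinely new input for general $(a,b)$ is that an $NE$-flip at an $(a,b)$-admissible pair sends an $(a,b)$-enlarged Dyck path to an $(a,b)$-enlarged Dyck path, so that the gap it opens or closes is a single $(a,b)$-Dyck tile. (Alternatively, one can try to factor $M$ and $N$ columnwise as ordered products of elementary matrices $I-qF_f$ over the admissible pairs $f$, so that $M^{-1}$ and $N^{-1}$ become the products of $\sum_{k\ge0}q^kF_f^k$ and manifestly count stackable cover-inclusive tilings; the difficulty is then to order the factors coherently as $\pi$ varies.)
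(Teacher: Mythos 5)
Your framing is the same one the paper invokes: the paper's entire proof is an appeal to the Principle of Inclusion--Exclusion (with a pointer to the analogous theorem in \cite{SZJ12}), and your reduction of the statement to $M\widetilde M=I$, the observation that all four matrices are unitriangular with respect to $\le$ (so a one-sided inverse suffices and everything is invertible), and the reading of $\sum_{\sigma}M_{\pi',\sigma}\widetilde M_{\sigma,\pi}$ as a signed enumeration of pairs consisting of a cover-exclusive tiling below a seam $\sigma$ and a cover-inclusive tiling above it, is exactly the combinatorial content behind that citation. Up to that point your argument is correct and considerably more explicit than what the paper writes down; your use of the uniqueness of the cover-exclusive tiling of a given region is also consistent with how the preceding Proposition is phrased.

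The gap is that the proof lives entirely inside the sign-reversing involution, and you have specified what it must accomplish rather than constructed it --- you say so yourself. Concretely, one must (i) give the rule selecting the canonical seam tile $d$; (ii) show that deleting $d$ from the cover-inclusive side leaves a cover-inclusive tiling and moves the seam by exactly one $(a,b)$-admissible $NE$-flip, i.e.\ that the vacated ribbon is a single $(a,b)$-enlarged Dyck tile and condition (I) is preserved for the remaining tiles; (iii) show the dual statement for adjoining $d$ to the cover-exclusive side, where condition (II) is the constraint at risk; (iv) verify that the selection rule is stable under the move, so that applying the map twice returns the original pair --- this is where naive choices of ``canonical tile'' typically fail; and (v) verify fixed-point-freeness for $\pi'<\pi$. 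None of these is automatic, and (ii)--(iv) are precisely the nontrivial content for general $(a,b)$, since a tile of the cover-inclusive tiling that touches the seam need not be positioned so that transporting it across the seam corresponds to an admissible flip. As it stands the proposal is a correct plan whose decisive step is still open; it must either be completed by an explicit involution or, as the paper does, discharged by citation to a source where this involution is carried out.
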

\begin{proof}
The theorem follows from the Principle of Inclusion-Exclusion in \cite{Sta97}.
See also the proof of Theorem  in \cite{SZJ12}.
\end{proof}

\begin{example}
We consider rational $(1,2)$-Dyck tilings. 
The order of bases is 
$NNEEE$, $NENEE$, $NEENE$, $NEEEN$, $ENNEE$, $ENENE$, $ENEEN$, $EENNE$, $EENEN$, and $EEENN$.
\begin{align*}
M&=
\begin{pmatrix}
1 & 0 & 0 & 0 & 0 & 0 & 0 & 0 & 0 & 0 \\
-q & 1 & 0 & 0 & 0 & 0 & 0 & 0 & 0 & 0 \\
0 & -q & 1 & 0 & 0 & 0 & 0 & 0 & 0 & 0 \\
0 & 0 & -q & 1 & 0 & 0 & 0 & 0 & 0 & 0 \\
0 & -q & 0 & 0 & 1 & 0 & 0 & 0 & 0 & 0 \\
0 & q^2 & -q & 0 & -q & 1 & 0 & 0 & 0 & 0 \\
-q & 0 & q^2 & -q & 0 & -q & 1 & 0 & 0 & 0 \\
0 & 0 & 0 & 0 & 0 & -q & 0 & 1 & 0 & 0 \\
q^2 & 0 & 0 & 0 & 0 & q^2 & -q & -q & 1 & 0 \\
0 & 0 & 0 & 0 & 0 & 0 & 0 & 0 & -q & 1 \\
\end{pmatrix},\\
M^{-1}&=
\begin{pmatrix}
1 & 0 & 0 & 0 & 0 & 0 & 0 & 0 & 0 & 0 \\
q & 1 & 0 & 0 & 0 & 0 & 0 & 0 & 0 & 0 \\
q^2 & q & 1 & 0 & 0 & 0 & 0 & 0 & 0 & 0 \\
q^3 & q^2 & q & 1 & 0 & 0 & 0 & 0 & 0 & 0 \\
q^2 & q & 0 & 0 & 1 & 0 & 0 & 0 & 0 & 0 \\
q^3 & q^2 & q & 0 & q & 1 & 0 & 0 & 0 & 0 \\
q+q^4 & q^3 & q^2 & q & q^2 & q & 1 & 0 & 0 & 0 \\
q^4 & q^3 & q^2 & 0 & q^2 & q & 0 & 1 & 0 & 0 \\
q^5 & q^4 & q^3 & q^2 & q^3 & q^2 & q & q & 1 & 0 \\
q^6 & q^5 & q^4 & q^3 & q^4 & q^3 & q^2 & q^2 & q & 1 \\
\end{pmatrix}
\end{align*}
Note that when $\pi'=ENEEN$ and $\pi=NNEEE$, we have a non-trivial 
Dyck tiling and this tiling contributes as $q$ in $M^{-1}$.
This tiling also contributes as $q^{3}$ in $N^{-1}$.
\end{example}

\section{Decomposition}
\label{sec:decomp}
Let $\mathcal{D}$ be a $(a,b)$-Dyck tiling.
The main purpose of this section is to introduce a decomposition 
of $\mathcal{D}$ into $ab$ $(1,1)$-Dyck tilings.
In \cite{KalMuh15}, a strip decomposition of $(1,b)$-Dyck paths is introduced
and studied.
The strip decomposition assigns a $(1,b)$-Dyck path to $b$ $(1,1)$-Dyck 
paths.
We call this strip decomposition the {\it horizontal} decomposition.
When $a\neq1$, one can also consider a strip decomposition which 
is in the {\it vertical} direction.
Similarly, a vertical decomposition gives a $(a,1)$-Dyck path 
to $a$ $(1,1)$-Dyck paths.
Thus, for $(a,b)$-Dyck paths, we can introduce the decomposition which 
is both horizontal and vertical.
This decomposition assigns a $(a,b)$-Dyck path to $ab$ $(1,1)$-Dyck paths.
In the language of Dyck tilings, $(a,b)$-Dyck paths correspond to 
a $(a,b)$-Dyck tiling consisting of single boxes, which means that 
the tiling do not contain non-trivial $(a,b)$-Dyck tiles.
In this section, we generalize this decomposition from $(a,b)$-Dyck 
paths to $(a,b)$-Dyck tilings.

\subsection{Step and height sequences}
Let $\pi$ be an $(a,b)$-Dyck path of size $n$.
We introduce the step sequence and the height sequence to the path $\pi$
following \cite{KalMuh15}.

The step sequence $\mathbf{u}(\pi)=(u_1,\ldots,u_{an})$ is a sequence 
of non-negative integers defined for $\pi$ as 
\begin{align*}
&u_1\le u_2\le \ldots \le u_{an}, \\
&u_{k}\le b(k-1)/a, \quad \forall k\in[1,an].
\end{align*}
The entry $u_{k}$ in the step sequence indicates that the path $\pi$
passes through the edge connecting $(u_{k},k-1)$ and $(u_k,k)$.

Similarly, the height sequence $\mathbf{h}(\pi)$ is a sequence of 
positive integers satisfying 
\begin{align*}
&h_1\le h_2\le \ldots\le h_{bn}, \\
&h_{k}\ge\lceil ka/b\rceil, \quad \forall k\in[1,bn].
\end{align*}
The entry $h_{k}$ in the height sequence indicates that the path $P$ 
passes through the edge connecting $(k-1,h_{k})$ and $(k,h_{k})$.

\begin{example}
Let $(a,b)=(2,3)$, $n=2$ and $\pi=NENENEENEE$.
The step sequence is $\mathbf{u}(\pi)=(0,1,2,4)$ and 
the height sequence is $\mathbf{h}(\pi)=(1,2,3,3,4,4)$.
\end{example}

\subsection{Horizontal and vertical strip decomposition}
Let $\pi$ be an $(a,b)$-Dyck path of size $n$ and 
$\mathbf{h}(\pi):=(h_1,h_2,\ldots,h_{bn})$ be its 
height sequence.
We construct $b$ integer sequences $\mathbf{H}_{i}:=(H^{i}_{1},\ldots,H^{i}_{n})$, $1\le i\le b$, 
from $\mathbf{h}(\pi)$ by
\begin{align*}
H^{i}_{j}:=h_{(j-1)b+i}, \quad j\in[1,n].
\end{align*}
The integer sequence $\mathbf{H}_{i}$ defines a lattice path $p_{i}$ from 
$(0,0)$ to $(n,an)$.
Note that a path $p_{i}$ may not be an $(a,1)$-Dyck path of size $n$.

\begin{defn}
\label{defn:hsdecomp}
We call the map from $\pi$ to $b$ paths $p_{i}$, $1\le i\le b$ 
the horizontal strip decomposition of $\pi$.
We denote this map by $\theta_{h}:\pi\mapsto (p_1,\ldots,p_{b})$.
\end{defn} 

Let $\mathbf{u}(\pi):=(u_1,\ldots,u_{an})$ be the step sequence 
of $\pi$.
We construct $a$ integer sequences $\mathbf{U}_{i}:=(U^{i}_{1},\ldots,U^{i}_{n})$, 
$1\le i\le a$, from $\mathbf{u}(\pi)$ by
\begin{align*}
U^{i}_{j}:=u_{(j-1)a+i}, \quad j\in[1,n]
\end{align*}
The integer sequence $\mathbf{U}_{i}$ defines a lattice path $q_{i}$ from 
$(0,0)$ $(bn,n)$.

\begin{defn}
\label{defn:vsdecomp}
We call the map from $\pi$ to $a$ paths $q_{i}$, $1\le i\le a$ 
the vertical strip decomposition of $\pi$.
We denote this map by $\theta_{v}:\pi\mapsto (q_1,\ldots,q_{a})$.
\end{defn} 

We give an another description of the horizontal and vertical 
decompositions of $\pi$.
We first consider the horizontal decomposition.
We define $b$ non-negative integer sequences $\mathbf{v}_{i}$, $1\le i\le b$, recursively 
as follows.

\noindent {\bf Algorithm A:}
\begin{enumerate}
\item Set $i:=b$, $\mathbf{u}':=\mathbf{u}(\pi)$ and $\mathbf{v}_{0}=(0,\ldots,0)$. 
\item Define
\begin{align}
\label{eqn:hsdandv}
\mathbf{v}_{b+1-i}:=\left\lceil \mathbf{u}'/i \right\rceil.
\end{align}
\item Decrease $i$ by one. Replace $\mathbf{u}'$ by $\mathbf{u}'-\mathbf{v}_{b+1-i}$.
Then, go to (2). The algorithm stops when $i=1$.
\end{enumerate}

\begin{prop}
\label{prop:vssp}
Let $\mathbf{v}_{i}$, $1\le i\le b$, be integer sequences defined as above.
Then, $\mathbf{v}_{i}$ is the step sequence of a Dyck path $p_{i}$.
\end{prop}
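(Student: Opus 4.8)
The plan is to prove the sharper statement that $\mathbf v_i$ agrees, entry by entry, with the step sequence of the very path $p_i$ produced by the horizontal decomposition $\theta_h$ of Definition~\ref{defn:hsdecomp}; this at once shows that $\mathbf v_i$ is a genuine step sequence and that Algorithm~A is an alternative description of $\theta_h$. Two ingredients are needed: the duality between step and height sequences, used to rewrite the step sequence of $p_i$ directly in terms of $\mathbf u(\pi)$, and a purely combinatorial identity saying that the iterated ceilings of Algorithm~A compute residue-class counting functions.

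First I would record the elementary step--height duality: for any lattice path from the origin with step sequence $(u_1,u_2,\dots)$ and height sequence $(h_1,h_2,\dots)$ one has $u_k<j\iff h_j\ge k$, since both sides assert that the $k$th north step precedes the $j$th east step; hence $\{j:h_j<k\}=\{1,2,\dots,u_k\}$. Apply this to $\pi\in\mathfrak D_n^{(a,b)}$: the step sequence of $p_i$ at index $k$ equals $\#\{j\in[1,n]:h_{(j-1)b+i}<k\}$, and since $\{(j-1)b+i:j\in[1,n]\}$ is exactly the set of elements of $[1,bn]$ congruent to $i$ modulo $b$, this count equals
\[
c_i(u_k),\qquad c_i(m):=\#\{p\in\{1,\dots,m\}:p\equiv i\pmod b\}=\Bigl\lceil\tfrac{m-i+1}{b}\Bigr\rceil .
\]
It therefore suffices to prove that Algorithm~A, applied to a single nonnegative integer $m$, returns the tuple $(c_1(m),\dots,c_b(m))$; applying this in each coordinate $m=u_k$ then gives $(\mathbf v_i)_k=c_i(u_k)$, which is the claim.

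For the combinatorial identity I would induct on $b$, the base case $b=1$ being immediate. The first value computed by Algorithm~A is $\lceil m/b\rceil=c_1(m)$, the size of bin~$1$ when $\{1,\dots,m\}$ is dealt round-robin into $b$ bins. Deleting bin~$1$ and renumbering the remaining $m-c_1(m)$ integers by $1,2,\dots$ in increasing order, a short count --- exactly $j$ of the integers $\le(j-1)b+(\ell+1)$ lie in bin~$1$ --- shows that the integer $(j-1)b+(\ell+1)$ receives the new label $(j-1)(b-1)+\ell$, so old bin $\ell+1$ is carried bijectively onto the elements congruent to $\ell$ modulo $b-1$ in the renumbered set; hence $c_{\ell+1}(m)=c^{(b-1)}_\ell(m-c_1(m))$. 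Since $m-c_1(m)$ is precisely the value $\mathbf u'$ is replaced by and the remaining steps of Algorithm~A are Algorithm~A with parameter $b-1$ on $m-c_1(m)$, the inductive hypothesis gives that its outputs are $c_2(m),\dots,c_b(m)$, closing the induction.

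The main obstacle is the bookkeeping in this induction: it is trivial that the totals telescope ($\sum_i c_i(m)=m$), but one must check that each peeling $\mathbf u'\mapsto\lceil\mathbf u'/i\rceil$ reproduces the correct residue class with the correct multiplicity, i.e.\ that $\lceil(m-\lceil m/b\rceil)/(b-1)\rceil=\lceil(m-1)/b\rceil$ and its iterates. The renumbering bijection above handles this cleanly, and the only delicate point is the new-label formula $(j-1)b+(\ell+1)\mapsto(j-1)(b-1)+\ell$; everything else --- the duality lemma, the reduction to a single coordinate, and the verification that the resulting $\mathbf v_i$ is weakly increasing of length $an$ --- is routine.
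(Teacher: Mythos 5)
Your proof is correct and follows the same route as the paper, namely identifying $\mathbf{v}_i$ entrywise with the step sequence of the path $p_i$ produced by $\theta_h$; the paper simply declares this identification ``straightforward,'' whereas you supply the actual verification (the step--height duality $u_k<j\iff h_j\ge k$ and the identity showing the iterated ceilings of Algorithm~A compute the residue-class counts $c_i(u_k)=\lceil(u_k-i+1)/b\rceil$). I checked your renumbering bijection and the resulting recursion $c_{\ell+1}(m)=c^{(b-1)}_{\ell}(m-\lceil m/b\rceil)$; they are sound.
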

%%%%%%%%%%
\begin{proof}
By a horizontal decomposition of $\pi$, we have $b$ lattice paths $p_i$, 
$1\le i\le b$.
The $i$-th Dyck path $p_i$ consists of $(j-1)b+i$-th columns for $1\le j\le n$.
Thus, it is straight forward that $\mathbf{v}_{i}$ in Eqn. (\ref{eqn:hsdandv}) 
is the step sequence of $p_{i}$.
\end{proof}

Given a partition $\lambda:=(\lambda_1,\ldots,\lambda_{l})$ where $l$ is 
the length of $\lambda$, we define 
the transposition of $\lambda$, denoted 
by $\lambda^{t}:=(\lambda^{t}_{1},\ldots,\lambda_{\lambda_1}^{t})$, as 
\begin{align*}
\lambda^{t}_{i}:=\#\{j | \lambda_{j}\ge i\}.
\end{align*}
We append several $0$'s to $\lambda^{t}_{i}$ if necessary.
For example, we consider a rational Dyck path in $\mathfrak{D}_{2}^{(2,3)}$.
If we have a step sequence $(0,1,2,4)$, then its transposition is 
given by $(3,2,1,1,0,0)$.

We consider the vertical decomposition of $P$.
We replace $\mathbf{u}(\pi)$ by its transposition $\mathbf{u}(\pi)^{t}$, 
$\mathbf{v}_{i}$ by $\mathbf{w}_{i}$ in Algorithm A.

\begin{prop}
\label{prop:transss}
Let $\mathbf{w}_{i}$, $1\le i\le b$, be integer sequences defined as above.
Then, $\mathbf{w}_{i}^{t}$ is the step sequence of a Dyck path $q_{i}$.
\end{prop}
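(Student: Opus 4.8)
The plan is to reduce Proposition \ref{prop:transss} to Proposition \ref{prop:vssp} by a reflection argument. Recall that the vertical strip decomposition $\theta_v$ is, by its very definition, nothing but the horizontal strip decomposition applied to the path obtained by reflecting $\pi$ along the line $y=-x$: reflecting an $(a,b)$-Dyck path yields a $(b,a)$-Dyck path, north steps become east steps and vice versa, and the step sequence of $\pi$ goes to the height sequence of the reflected path — more precisely, the step sequence $\mathbf{u}(\pi)$ and the transposed sequence $\mathbf{u}(\pi)^t$ encode the same path, one by recording the horizontal position of each north step, the other by recording the vertical position of each east step. This is exactly the content of the transposition operation $\lambda\mapsto\lambda^t$ introduced just before the statement, together with the example of $(0,1,2,4)\leftrightarrow(3,2,1,1,0,0)$.

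First I would make precise the claim that, for a lattice path $r$ from $(0,0)$ to $(N,M)$ written as a word in $N$'s and $E$'s, the "step sequence" (positions of the $N$-steps, read as heights) of $r$ and the "step sequence" of the reflected path $r^\ast$ (the word read backwards with $N\leftrightarrow E$) are transposes of one another in the sense of the formula $\lambda^t_i=\#\{j\mid\lambda_j\ge i\}$. This is a purely combinatorial identity about Young diagrams: the cells weakly below the path, counted by columns versus counted by rows. Second, I would observe that Algorithm A, run on the input $\mathbf{u}(\pi)^t$ with output sequences $\mathbf{w}_i$, is literally the same algorithm that Proposition \ref{prop:vssp} analyses, now applied to the height sequence of the reflected $(b,a)$-path; hence by Proposition \ref{prop:vssp} (applied with the roles of $a$ and $b$ exchanged, which is legitimate since $(a,b)$ and $(b,a)$ are simultaneously coprime) each $\mathbf{w}_i$ is the step sequence of a Dyck path, say $\tilde q_i$. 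Third, I would transpose back: since $\mathbf{w}_i$ is a step sequence, $\mathbf{w}_i^t$ is the corresponding height sequence, and under the reflection this height sequence is the step sequence of $q_i:=\tilde q_i^{\,\ast}$, which is exactly the path produced by $\theta_v$ in Definition \ref{defn:vsdecomp}. Assembling these three steps gives the proposition.

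Alternatively, and perhaps more cleanly, I would argue directly: by Definition \ref{defn:vsdecomp} the path $q_i$ has step sequence $\mathbf{U}_i=(u_i,u_{a+i},\ldots,u_{(n-1)a+i})$, i.e. it reads off every $a$-th entry of $\mathbf{u}(\pi)$; translating through the transposition identity, $\mathbf{U}_i^t$ reads off the columns of the $i$-th horizontal slice of the transposed diagram. One then checks that Algorithm A run on $\mathbf{u}(\pi)^t$ produces precisely these sliced-and-transposed sequences $\mathbf{w}_i$ with $\mathbf{w}_i^t=\mathbf{U}_i$ — the same bookkeeping that already appears in the proof of Proposition \ref{prop:vssp}, with $\mathbf{h}$ replaced by $\mathbf{u}^t$ throughout. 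Either way the only genuine content is Proposition \ref{prop:vssp}; everything else is a change of coordinates.

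The main obstacle, such as it is, is notational rather than mathematical: one must be careful that the transposition operation $\lambda\mapsto\lambda^t$ as defined (which pads with zeros and has length $\lambda_1$) is applied to sequences of the correct length — $\mathbf{u}(\pi)$ has length $an$ and entries bounded by $b(k-1)/a$, so $\mathbf{u}(\pi)^t$ has length governed by $b$ rather than $a$ — and that the indices $1\le i\le b$ appearing in the statement of Proposition \ref{prop:transss} are indeed what the reflected Algorithm A produces, even though the vertical decomposition ostensibly yields $a$ paths $q_1,\ldots,q_a$. (I suspect the "$1\le i\le b$" in the statement should read "$1\le i\le a$", matching Definition \ref{defn:vsdecomp}; the proof will make the correct range transparent.) Once the lengths and ranges are pinned down, invoking Proposition \ref{prop:vssp} on the reflected path closes the argument with no further work.
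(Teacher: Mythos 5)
Your proof is correct and follows essentially the same route as the paper: the paper's own argument reflects $\pi$ (reversing the word and swapping $N\leftrightarrow E$ to get a $(b,a)$-Dyck path), observes that transposition corresponds to this reflection, and then invokes Proposition \ref{prop:vssp}. Your remark about the index range ($1\le i\le b$ versus $1\le i\le a$) is a fair catch of a notational slip in the statement, but it does not affect the argument.
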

%%%%%%%%%%
\begin{proof}
Let $P=P_1P_2\ldots P_{(a+b)n}\in\mathfrak{D}_{n}^{(a,b)}$ be 
a rational Dyck path. 
We define $N^{\sharp}=E$ and $E^{\sharp}=N$.
The vertical decomposition of $P$ is equivalent to the horizontal 
decomposition of the $P^{\sharp}\in\mathfrak{D}_{n}^{(b,a)}$ 
where $P^{\sharp}$ is the Dyck path 
$P^{\sharp}=p_{(a+b)n}^{\sharp}\ldots p_1^{\sharp}$.

Note that taking the transposition means we apply the operation $\sharp$.
Then, the statement in the Proposition is a direct consequence of Proposition \ref{prop:vssp}
by the transposition.
\end{proof}

The following proposition is clear from the definitions of 
$\theta_{h}$ and $\theta_{v}$.
\begin{prop}
\label{prop:hvorder}
Let $\pi$ be an $(a,b)$-Dyck path.
Then, we have 
\begin{align*}
\theta_{h}\circ\theta_{v}=\theta_{v}\circ\theta_{h}.
\end{align*}
\end{prop}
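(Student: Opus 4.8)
The plan is to translate the statement into the language of Young diagrams, where both $\theta_{h}$ and $\theta_{v}$ become row/column sub-sampling operations that manifestly commute. Recall the dictionary for an $(a,b)$-Dyck path $\pi$ of size $n$: identifying $\pi$ with the Young diagram $\mu$ lying above it, the step sequence $\mathbf{u}(\pi)=(u_{1},\dots,u_{an})$ records the lengths of the $an$ rows of $\mu$ (row $k$ has length $u_{k}$) and the height sequence $\mathbf{h}(\pi)=(h_{1},\dots,h_{bn})$ records the co-heights of the $bn$ columns (column $k$ has height $an-h_{k}$). These are conjugate partitions: $h_{k}=\#\{j\in[1,an]:u_{j}\le k-1\}$ and $u_{k}=\#\{j\in[1,bn]:h_{j}\le k-1\}$. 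This conjugacy is valid for \emph{any} monotone lattice path with prescribed endpoints, not only Dyck paths; this is the form in which I would use it, since (as already noted after Definitions \ref{defn:hsdecomp} and \ref{defn:vsdecomp}) the intermediate paths produced by the decompositions need not be Dyck paths.

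Next I would reinterpret the two decompositions. By Definition \ref{defn:vsdecomp}, $\theta_{v}(\pi)=(q_{1},\dots,q_{a})$ where $q_{i}$ has step sequence $(u_{i},u_{a+i},\dots,u_{(n-1)a+i})$; thus the Young diagram of $q_{i}$ is obtained from $\mu$ by keeping the rows indexed $\equiv i \pmod a$ and re-stacking them, and $q_{i}$ is a path from $(0,0)$ to $(bn,n)$. Dually, by Definition \ref{defn:hsdecomp}, $\theta_{h}(\pi)=(p_{1},\dots,p_{b})$ where the Young diagram of $p_{l}$ is obtained from $\mu$ by keeping the columns indexed $\equiv l \pmod b$, and $p_{l}$ runs from $(0,0)$ to $(n,an)$. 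Applying $\theta_{h}$ to $q_{i}$ then keeps, among the $bn$ columns of $q_{i}$, those indexed $\equiv l \pmod b$; applying $\theta_{v}$ to $p_{l}$ keeps, among the $an$ rows of $p_{l}$, those indexed $\equiv i \pmod a$. In either order the resulting cell set is $\{(j,m) : ((j-1)a+i,(m-1)b+l)\in\mu\}$, the cells of $\mu$ with row index in the progression $\{i,a+i,\dots\}$ and column index in $\{l,b+l,\dots\}$, re-compactified. Since a Young diagram is a finite order ideal in $\mathbb{Z}_{>0}^{2}$, restricting it to a product of two index sets and re-compactifying is independent of the order of the two restrictions and again produces an order ideal, i.e.\ a genuine $(1,1)$-Dyck path of size $n$. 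Hence the $(i,l)$-component of $\theta_{h}\circ\theta_{v}$ equals the $(l,i)$-component of $\theta_{v}\circ\theta_{h}$, which is the asserted identity once the index set $[1,a]\times[1,b]$ of the output of $\theta_{h}\circ\theta_{v}$ is identified with the index set $[1,b]\times[1,a]$ of $\theta_{v}\circ\theta_{h}$ by transposition.

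The mathematical content is thus very light, and the only genuine care needed — the ``main obstacle,'' such as it is — is bookkeeping: keeping track of the transpose on the index sets so that ``$\theta_{h}\circ\theta_{v}=\theta_{v}\circ\theta_{h}$'' is read correctly, and handling the padding conventions cleanly given that $q_{i}$ and $p_{l}$ may violate the Dyck condition (hence the insistence on phrasing everything for arbitrary monotone lattice paths). If one prefers to avoid Young diagrams entirely, the same argument runs on sequences: chasing the definitions, the $(i,l)$-component of $\theta_{h}\circ\theta_{v}$ has height sequence $A_{m}=\#\{j\in[1,n]:u_{(j-1)a+i}\le(m-1)b+l-1\}$ and the $(l,i)$-component of $\theta_{v}\circ\theta_{h}$ has step sequence $B_{k}=\#\{j\in[1,n]:h_{(j-1)b+l}\le(k-1)a+i-1\}$; using $h_{p}=\#\{j:u_{j}\le p-1\}$ together with monotonicity of $\mathbf{u}$, one checks that for $m\le n$ one has $B_{k}\le m-1 \iff u_{(k-1)a+i}\le (m-1)b+l-1$, so that $A_{m}=\#\{k\in[1,n]:B_{k}\le m-1\}$, which is exactly the conjugacy relation identifying the two $(1,1)$-paths. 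I would include this computation as the rigorous backbone and keep the Young-diagram picture as the conceptual explanation.
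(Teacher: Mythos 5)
Your proof is correct. The paper offers no argument at all for Proposition \ref{prop:hvorder} — it simply declares the identity ``clear from the definitions'' — and your Young-diagram reading (both maps as row/column sub-sampling of the diagram, which manifestly commute) together with the conjugacy computation on $A_m$ and $B_k$ is a correct and complete elaboration of exactly that observation, so it matches the paper's (implicit) approach.
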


\begin{defn}
We define $\vartheta:=\theta_{v}\circ\theta_{h}$.
We call $\vartheta$ Dyck path decomposition.
\end{defn}
The Dyck path decomposition $\vartheta$ sends an $(a,b)$-Dyck path 
of size $n$
to a set of $ab$ paths of size $n$.

\begin{example}
\label{ex:hvdecomp}
We consider a Dyck path $P=NENENE^2NE^2\in\mathfrak{D}_{2}^{(2,3)}$.
The actions of $\theta_h$ and $\theta_v$ on $P$ are given by
\begin{align*}
&\theta_{h}(P)=(NEN^2EN,N^2EN^2E,N^3ENE), \\
&\theta_{v}(P)=
\left(
\begin{matrix}
NE^2NE^4\\
ENE^3NE^2
\end{matrix}
\right)
\end{align*}
Then, the map $\vartheta$ gives $6$ lattice paths
\begin{align*}
\vartheta(P)=
\left(
\begin{matrix}
NENE & NENE & NNEE \\
ENEN & NENE & NENE
\end{matrix}
\right)
\end{align*}
Note that paths in each column (resp. row) of $\vartheta(P)$ are obtained 
by the vertical (resp. horizontal) decomposition of $\theta_h(P)$ 
(resp. $\theta_v(P)$).
\end{example}

\subsection{\texorpdfstring{$b$}{b}-Stirling permutations}
A $b$-Stirling permutation of size $n$ is a permutation of 
the multiset $\{1^b,2^b,\ldots,n^b\}$ such that 
if an integer $j$ appears between two $i$'s, we have $j>i$.

\begin{defn}
We denote by $\mathfrak{S}_{n}^{(b)}$ the set of $b$-Stirling 
permutations of size $n$.
\end{defn}

We construct a $b$-Stirling permutation from 
a non-negative integer sequence $\mathbf{u}:=(u_1,\ldots,u_{n})$ 
such that $u_i\le b(i-1)$ for all $1\le i\le n$.
Since $u_1=0$ for any integer sequence $\mathbf{u}$, we put $b$ $1$'s in line.
By definition, $\mathfrak{p}_{1}:=1^b$ is a $b$-Stirling permutation.
Let $\mathfrak{p}_{i}$ be a $b$-Stirling permutation consisting of integers 
in $[1,i]$.
The multi-permutation $\mathfrak{p}_{i}$ can be recursively obtained from 
$\mathfrak{p}_{i-1}$ by inserting $i^{b}$ into the $u_{i}$-th 
position in $\mathfrak{p}_{i-1}$.

\begin{defn}
\label{defn:mu}
Let $\mathbf{u}$ and $\mathfrak{p}_{n}$ be sequences as above.
We denote by $\mu: \mathbf{u}\mapsto \mathfrak{p}_{n}$ the 
map from a non-negative integer sequence to a $b$-Stirling 
permutation.
We call $\mathfrak{p}_n$ the insertion history of $\mathbf{u}$.
\end{defn}

\begin{remark}
In Definition \ref{defn:insh}, we introduce an insertion history
for a permutation. 
The map $\mu$ is a generalization of an insertion history for 
a $b$-Stirling permutation.	
\end{remark}

It is obvious that $\mu$ has an inverse from $\mathfrak{p}_{n}$ 
from $\mathbf{u}$. 
We denote by $\mu^{-1}$ the inverse of $\mu$.

\begin{example}
Let $\mathbf{u}=(0,1,2,4)$ and $b=3$. 
We have a sequence of $b$-Stirling permutations:
\begin{align*}
111\rightarrow 1222111 \rightarrow 1233322111 \rightarrow 1233444322111.
\end{align*}
\end{example}

We give a description of the Dyck path decomposition in terms of 
$b$-Stirling permutations.
We first consider the horizontal decomposition of a rational 
Dyck path, then consider the vertical decomposition. 
Note that the order of horizontal and vertical decompositions
is irrelevant to the result by Proposition \ref{prop:hvorder} 

Let $P$ be a rational Dyck path in $\mathfrak{S}_{n}^{(a,b)}$ and 
$\mathbf{u}(P)$ be its step sequence.
Let $\mu:=\mu(\mathbf{u}(P))=(\mu_1,\mu_2,\ldots,\mu_{bn})$ 
be the $b$-Stirling permutation obtained from $\mathbf{u}(P)$.

We define $b$ permutations of size $n$ denoted by 
$\nu^{i}:=(\nu^i_1,\ldots,\nu^i_{n})$, $1\le i\le b$, from $\mu$
by
\begin{align}
\label{eqn:nu}
\nu^{i}_{j}:=\mu_{(j-1)b+i},
\end{align}
for $1\le j\le n$.

\begin{lemma}
Eqn. (\ref{eqn:nu}) is well-defined, {\it i.e.}, $\nu^{i}$ is 
a permutation.
\end{lemma}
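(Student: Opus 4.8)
The plan is to show that each sequence $\nu^i = (\nu^i_1, \ldots, \nu^i_n)$ extracted from the $b$-Stirling permutation $\mu = \mu(\mathbf{u}(P))$ by taking every $b$-th entry (starting at position $i$) is actually a permutation of $[1,n]$, i.e. contains each of $1, \ldots, n$ exactly once. The key structural fact I would exploit is the defining property of a $b$-Stirling permutation: the $b$ copies of any integer $k$ occur consecutively at the moment $k$ is inserted, and any later insertion of a block $k'^b$ (with $k' > k$) either lands entirely inside one of the gaps created by the block of $k$ or entirely outside it — but in either case a block of $k'$ is never split so as to separate two copies of $k$. Consequently, in the final word $\mu$, the $b$ copies of $k$ remain in positions that are "congruent" in a controlled way. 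More precisely, I would prove by induction on $n$ that at every stage $i$ of the recursive construction (inserting $i^b$ into the $u_i$-th position of $\mathfrak{p}_{i-1}$), the multi-permutation $\mathfrak{p}_i$ has the property that for each $k \le i$, the $b$ copies of $k$ occupy positions $p, p+1, \ldots, p+b-1$ for some $p$ — no, this is false in general once later blocks are inserted between copies — so instead the correct invariant is: \emph{the $b$ copies of $k$ occupy positions lying in distinct residue classes modulo $b$, namely all $b$ residue classes, in the sublists obtained after deleting all letters $> k$.}

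Here is the cleaner formulation I would actually carry out. Work with the reverse map: reading $\mathfrak{p}_n$ and successively deleting the blocks $n^b, (n-1)^b, \ldots$ recovers $\mathbf{u}(P)$ via $\mu^{-1}$. The claim "$\nu^i$ is a permutation" is equivalent to: for each $k \in [1,n]$, the $b$ occurrences of $k$ in $\mu$ lie in positions that are pairwise incongruent modulo $b$ — equivalently, exactly one occurrence of $k$ lies in each residue class $i \bmod b$. I would prove this by induction on the number of distinct values. Base case: $\mathfrak{p}_1 = 1^b$ occupies positions $1, \ldots, b$, one in each residue class. Inductive step: passing from $\mathfrak{p}_{i-1}$ to $\mathfrak{p}_i$ inserts the block $i^b$ at position $u_i$. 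Since the block has length exactly $b$, inserting it (a) shifts every later position by $b$, hence preserves all residue classes of the already-present letters, and (b) places the $b$ new copies of $i$ into $b$ consecutive positions, which automatically hit all $b$ residue classes. The constraint $u_i \le b(i-1)$ guarantees the insertion position is legitimate, and the $b$-Stirling condition is exactly what makes the recursive insertion of a \emph{contiguous} block of $b$ copies valid. Thus the residue-class invariant propagates, and in $\mu = \mathfrak{p}_n$ each value $k$ has exactly one representative in each class $i \bmod b$; therefore $\nu^i$, which collects precisely these representatives for $i = 1, \ldots, b$, contains each $k \in [1,n]$ exactly once.

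The main obstacle — and the step deserving the most care — is item (a): verifying that inserting a length-$b$ block preserves the residue classes of all previously placed letters. This is where the hypothesis $\gcd(a,b) = 1$ is irrelevant but the block length being \emph{exactly} $b$ (not $a$ or something else) is essential; I would write this out explicitly, noting that if $x$ is a position $\ge u_i$ in $\mathfrak{p}_{i-1}$ it becomes position $x + b$ in $\mathfrak{p}_i$, so $x + b \equiv x \pmod b$, while positions $< u_i$ are unchanged. The only subtlety is that one must also check the newly inserted block lands on $b$ consecutive integer positions and hence meets every class — immediate — and that the invariant is about positions in the \emph{full} word $\mathfrak{p}_i$, so no deletion bookkeeping is needed at this stage. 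Once (a) is in hand, (b) is trivial, and the lemma follows; the well-definedness of $\nu^i$ as a length-$n$ sequence is clear from counting ($\mu$ has $bn$ letters, $n$ of them in each residue class).
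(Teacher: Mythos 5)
Your proposal is correct and takes essentially the same approach as the paper: the paper's proof asserts that the number of (larger) letters lying between copies of $i$ is a multiple of $b$ "by construction," which is exactly the residue-class invariant you establish, and your induction on the block-insertion process simply makes that one-sentence assertion explicit. The only cosmetic difference is that you phrase the conclusion as "each value has exactly one occurrence in each residue class mod $b$" rather than "copies of $i$ are separated by $b$ steps," but these are the same fact.
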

%%%%%%%%%%%
\begin{proof}
In a $b$-Stirling permutation $\mu$, each integer in $[1,n]$ appears 
exactly $b$ times. 
By definition, we may several integers larger than $i$ between two integer $i$'s. 
By construction, the number of such integers is zero modulo $b$. 
We take integers separated by $b$ steps in $\mu$ to obtain $\nu^{i}$ 
in Eqn. (\ref{eqn:nu}), which insures that $\nu^{i}$ is a permutation.
\end{proof}

We construct $b$ integer sequences 
$\xi^{i}$, $1\le i\le b$, from permutations $\nu^{i}$ by 
the map $\mu^{-1}$, {\it i.e.}, 
we define 
\begin{align}
\label{eqn:xi}
\xi^{i}=\mu^{-1}(\nu^{i}),\quad \forall i\in[1,b].
\end{align}

\begin{lemma}
The integer sequences $\xi^{i}$, $1\le i\le b$, are 
weakly increasing. 
\end{lemma}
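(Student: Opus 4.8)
The plan is to extract, directly from the $b$-Stirling construction of $\mu$, the explicit formula
\[
\xi^{i}_{m}=\max\!\left(0,\ \left\lfloor\frac{u_m-i}{b}\right\rfloor+1\right),\qquad 1\le i\le b,\quad 1\le m\le n,
\]
where $\mathbf{u}(P)=(u_1,\ldots,u_n)$ is the step sequence of $P$. Once this is proved the Lemma is immediate: a step sequence satisfies $u_1\le u_2\le\cdots\le u_n$, and $x\mapsto\max(0,\lfloor(x-i)/b\rfloor+1)$ is non-decreasing, so each $\xi^{i}$ is weakly increasing. (Using in addition $u_m\le b(m-1)$ one gets $\xi^{i}_m\le m-1$, so each $\xi^{i}$ is even a bona fide $(1,1)$-Dyck path step sequence, consistently with Proposition \ref{prop:vssp}; but only weak monotonicity is claimed here.)

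Recall that $\mathfrak{p}_0$ is the empty word and $\mathfrak{p}_m$ is obtained from $\mathfrak{p}_{m-1}$ by inserting the block $m^{b}$ into the consecutive positions $u_m+1,\ldots,u_m+b$, with $\mathfrak{p}_n=\mu$. For $1\le i\le b$ write $\nu^{i}(\mathfrak{p})$ for the subword of a word $\mathfrak{p}$ formed by the entries at positions congruent to $i$ modulo $b$, so $\nu^{i}=\nu^{i}(\mathfrak{p}_n)$ and, by the previous Lemma applied to $\mathfrak{p}_{m-1}$, each $\nu^{i}(\mathfrak{p}_{m-1})$ is a permutation of $\{1,\ldots,m-1\}$. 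The step I would carry out first is the residue bookkeeping: passing from $\mathfrak{p}_{m-1}$ to $\mathfrak{p}_m$ fixes every position $\le u_m$ and shifts every later position by $b$, hence preserves residues modulo $b$, while among the $b$ new positions $u_m+1,\ldots,u_m+b$ exactly one lies in each residue class and that one carries the letter $m$. It follows that $\nu^{i}(\mathfrak{p}_m)$ is obtained from $\nu^{i}(\mathfrak{p}_{m-1})$ by inserting the single letter $m$ right after its first $t(m,i)$ entries, where $t(m,i):=\#\{\,p\in[1,u_m]:p\equiv i\pmod{b}\,\}=\max(0,\lfloor(u_m-i)/b\rfloor+1)$.

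Iterating this for $m=1,\ldots,n$ exhibits $\nu^{i}$ as the word produced from the empty word by successively inserting the letter $m$ after position $t(m,i)$; but this is exactly the recursive description of the map $\mu$ with parameter $1$ applied to the sequence $(t(1,i),\ldots,t(n,i))$, equivalently of the insertion history $\mathfrak{h}$ of Definition \ref{defn:insh}. Hence $\xi^{i}=\mu^{-1}(\nu^{i})=\mathfrak{h}(\nu^{i})=(t(1,i),\ldots,t(n,i))$, which is the formula of the first paragraph, and the Lemma follows.

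I expect the only genuine work to be the residue bookkeeping of the middle paragraph — verifying with care that each insertion of a block $m^{b}$ leaves the residue classes of the earlier letters intact and contributes exactly one new copy of $m$ to each residue class, so that the residue-$i$ subword changes by precisely one insertion, at position $t(m,i)$. An alternative route would be to identify each $\xi^{i}$ with one of the sequences $\mathbf{v}_{j}$ output by Algorithm A and invoke Proposition \ref{prop:vssp}, but the explicit formula above seems shorter and more self-contained.
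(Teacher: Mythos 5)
Your proof is correct, and it takes a genuinely different route from the paper's. The paper argues qualitatively: since $\mathbf{u}(P)$ is non-decreasing, consecutive values $i-1$ and $i$ occupy constrained relative positions in the $b$-Stirling permutation $\mu$, this constraint survives passage to the residue subwords $\nu^{k}$, and that order condition on consecutive values is exactly what makes the insertion history $\xi^{k}=\mu^{-1}(\nu^{k})$ weakly increasing. You instead compute $\xi^{i}$ in closed form, $\xi^{i}_{m}=\#\{p\in[1,u_m]:p\equiv i\ (\mathrm{mod}\ b)\}=\max\bigl(0,\lfloor(u_m-i)/b\rfloor+1\bigr)$, by tracking how one insertion of the block $m^{b}$ affects each residue class; your residue bookkeeping is sound (old positions keep their residues, the $b$ new positions $u_m+1,\ldots,u_m+b$ hit each class once, and the positions $\equiv i$ preceding the new letter are exactly those in $[1,u_m]$), and the formula checks against the paper's worked example $\mathbf{u}=(0,1,2,4)$, $b=3$, giving $(0,1,1,2)$, $(0,0,1,1)$, $(0,0,0,1)$. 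Monotonicity is then immediate from monotonicity of $\mathbf{u}$. Your approach buys strictly more than the paper's: it also yields the bound $\xi^{i}_m\le m-1$, and since $\sum_{i}\xi^{i}_{m}=u_m$ with $\xi^{1}=\lceil\mathbf{u}/b\rceil$, it essentially re-derives the identification of $\xi^{i}$ with the sequences $\mathbf{v}_{i}$ of Algorithm A (Propositions \ref{prop:vssp} and \ref{prop:xiss}) as a byproduct, whereas the paper's argument establishes only the stated monotonicity. The cost is a more delicate local verification (the middle paragraph), which you correctly flag as the only real work.
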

%%%%%%%%%%%
\begin{proof}
Let $\mu^{(i)}$ be the $b$-Stirling permutation consisting 
of integers in $[1,i]$.
Since $\mathbf{u}(P)$ is the step sequence of $P$, it 
is a non-decreasing integer sequence. 
This implies that integers $i-1$ and $i$ are next to each other 
in $\mu^{(i)}$ if $i$ is left to $i-1$, or $i$ is right to $i-1$.
From Eqn. (\ref{eqn:nu}), it is obvious that the relative positions 
of integers $i$ and $i-1$ in $\mu$ are preserved. 
More precisely, the integer $i$ is right next to $i-1$, or $i$ is right to $i-1$
if we ignore the integers larger than $i$. 
Since $\xi^{k}$, $1\le k\le b$, is obtained by Eqn. (\ref{eqn:xi}), 
the constraint on $i$ and $i-1$ in $\nu_{k}$ insures that 
$\xi^{k}$ is weakly increasing.
\end{proof}

Let $p_{i}$ be Dyck paths in $\mathfrak{D}_{n}^{(a,1)}$ 
obtained from $P$ by the horizontal strip-decomposition 
as in Definition \ref{defn:hsdecomp}.
\begin{prop}
\label{prop:xiss}
The weakly increasing sequences $\xi^{i}$, $1\le i\le b$, 
are the step sequences of $p_{i}$.
\end{prop}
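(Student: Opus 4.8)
The plan is to show that the two routes from $P$ to the $b$ step sequences coincide: the geometric horizontal strip decomposition $\theta_h$ of Definition \ref{defn:hsdecomp}, and the combinatorial route through the $b$-Stirling permutation $\mu(\mathbf{u}(P))$, the slices $\nu^i$ of Eqn. (\ref{eqn:nu}), and the maps $\xi^i=\mu^{-1}(\nu^i)$ of Eqn. (\ref{eqn:xi}). By Proposition \ref{prop:vssp} (equivalently by the recursive Algorithm A, via Proposition \ref{prop:xiss}'s companion Proposition \ref{prop:vssp}), the step sequences of the $p_i$ are already pinned down as the sequences $\mathbf{v}_i$, so it suffices to prove $\xi^i=\mathbf{v}_i$ for all $i\in[1,b]$. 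Thus the real content is a statement purely about $b$-Stirling permutations and insertion histories, with no more geometry needed once Proposition \ref{prop:vssp} is invoked.

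First I would unwind the definition of $\mu=\mu(\mathbf{u}(P))$ as an insertion history (Definition \ref{defn:mu}): $\mu$ is built by inserting the block $n^b$ into position $u_n$ of $\mu^{(n-1)}$, then $n-1$ into position $u_{n-1}$, and so on — but note the construction in the text inserts $i^b$ into $\mathfrak p_{i-1}$ at position $u_i$, so I should run the insertions in increasing order $i=1,2,\dots,n$. The key combinatorial observation is that, because $\mathbf{u}(P)$ is weakly increasing with $u_1=0$ (it is a step sequence), after each insertion the $b$ copies of $i$ sit as a contiguous block, and the position of that block among the $b$-blocks of $\mu^{(i)}$ — when one records, for each block, which column-class $1,\dots,b$ its copies fall into — is governed exactly by the ceiling division in Eqn. (\ref{eqn:hsdandv}). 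Concretely, I would track the "column index" $c(t)\in[1,b]$ of the $t$-th letter of $\mu^{(i)}$, i.e. $c(t)\equiv t\pmod b$, and show by induction on $i$ that the letters of $\mu^{(i)}$ equal to $j$ occupying column class $i'$ form, as $i'$ ranges over $[1,b]$, the $j$-th entries of the sequences produced by Algorithm A truncated to $[1,j]$. This is the bridge between "inserting a length-$b$ block at scalar position $u_i$" and "inserting the single letter $\lceil u_i/(b+1-i')\rceil$ into the $i'$-th extracted subsequence."

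The main obstacle — and the step I would spend the most care on — is precisely this bookkeeping: showing that slicing $\mu$ by residue class mod $b$ (Eqn. (\ref{eqn:nu})) and then inverting the insertion-history map on each slice (Eqn. (\ref{eqn:xi})) reproduces the successive remainders-and-ceilings peeled off in steps (2)–(3) of Algorithm A. The cleanest way to do this is a double induction: an outer induction on $n$ (adding the block $n^b$), and inside it an argument on how a single block-insertion at position $u_n$ affects the $b$ slices simultaneously — inserting $n$ into slice $\nu^i$ at the position that, under $\mu^{-1}$, produces the $n$-th entry $\lceil u'/(b+1-i)\rceil$ of $\mathbf v_i$ with $u'$ the residual step value after the first $i-1$ slices have absorbed their share. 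The weak monotonicity of each $\xi^i$ (the preceding Lemma) guarantees the $\mu^{-1}$-images are genuine step sequences so the identification with $\mathbf v_i$ is meaningful. Once $\xi^i=\mathbf v_i$ is established, Proposition \ref{prop:vssp} immediately gives that $\xi^i$ is the step sequence of $p_i$, completing the proof.
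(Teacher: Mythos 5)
Your proposal takes essentially the same route as the paper: both reduce the statement via Proposition \ref{prop:vssp} to the identity $\xi^{i}=\mathbf{v}_{i}$ for all $i\in[1,b]$, which the paper then simply declares clear from the constructions of $\mu$ and $\nu^{i}$. Your inductive bookkeeping on block insertions and residue classes modulo $b$ is a correct (and more detailed) account of exactly the verification the paper leaves implicit.
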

%%%%%%%%%%%%%
\begin{proof}
From Proposition \ref{prop:vssp}, it is enough to show that 
$\xi^{i}$ coincides with $\mathbf{v}_{i}$ for $1\le i\le b$.
However, this is clear form the constructions of $\mu$ and 
$\nu^{i}$.
\end{proof}

We apply a vertical strip-decomposition to 
$\xi^{i}:=(\xi^{i}_1,\ldots,\xi^{i}_{an})$, $1\le i\le b$.
We define $\rho^{i,j}:=(\rho^{i,j}_{1},\ldots,\rho^{i,j}_{n})$
with $1\le i\le b$ and $1\le j\le a$ from $\xi^{i}$ by 
\begin{align*}
\rho^{i,j}_{k}:=\xi^{i}_{a(k-1)+j}, 
\end{align*}
for $k\in[1,n]$.

Let $\vartheta(P):=(\vartheta_{i,j})$ for $1\le i\le b$ and $1\le j\le a$
be a Dyck path decomposition of $P$. 
\begin{prop}
The $ab$ weakly increasing sequences $\rho^{i,j}$ for $1\le i\le b$ and 
$1\le j\le a$ are the step sequences of $\vartheta_{i,j}$.
\end{prop}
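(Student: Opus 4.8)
The plan is to reduce this proposition to the two already-established decomposition results, Proposition \ref{prop:xiss} and Proposition \ref{prop:transss}, together with the commutativity Proposition \ref{prop:hvorder}. Since $\vartheta = \theta_v\circ\theta_h$ by definition, the paths $\vartheta_{i,j}$ are precisely the paths obtained by applying the vertical strip decomposition $\theta_v$ to each of the $b$ paths $p_i$ produced by the horizontal strip decomposition $\theta_h$ of $P$. By Proposition \ref{prop:xiss}, the step sequence of $p_i$ is $\xi^i = (\xi^i_1,\ldots,\xi^i_{an})$. So what remains is to check that, starting from $p_i$ with step sequence $\xi^i$, the vertical strip decomposition produces $a$ paths whose step sequences are exactly the sequences $\rho^{i,j}$ obtained by the splicing formula $\rho^{i,j}_k := \xi^i_{a(k-1)+j}$.

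First I would recall that the vertical strip decomposition $\theta_v$ of an $(a,1)$-Dyck path is governed, on the level of step sequences, by Definition \ref{defn:vsdecomp}: one splices the step sequence into $a$ subsequences by taking every $a$-th entry with the appropriate offset. That is exactly the formula defining $\rho^{i,j}$ from $\xi^i$. So the core of the argument is essentially definitional once one knows $\xi^i$ is genuinely the step sequence of an honest $(a,1)$-Dyck path $p_i$ — which is the content of Proposition \ref{prop:xiss} — and knows that $\theta_v$ acts on step sequences by this splicing. One small point to verify is that each $\rho^{i,j}$ is weakly increasing and satisfies the correct bound $\rho^{i,j}_k \le b(k-1)/a$ adjusted appropriately so that it defines a legitimate $(1,1)$-Dyck path of size $n$; this follows from the fact that $\xi^i$ is weakly increasing (already proven) and from the interlacing constraints carried over from $\mathbf{u}(P)$ through $\mu$ and $\nu^i$, in the same manner as in the proof that $\xi^k$ is weakly increasing.

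The second ingredient is to confirm that this composite $\theta_v\circ\theta_h$ agrees, entry by entry, with the composite description via $b$-Stirling permutations: the permutations $\nu^i$ extracted from $\mu(\mathbf{u}(P))$, then the sequences $\xi^i = \mu^{-1}(\nu^i)$, then the vertical splicing $\rho^{i,j}$. The commutativity $\theta_h\circ\theta_v = \theta_v\circ\theta_h$ from Proposition \ref{prop:hvorder} guarantees that it does not matter whether we first perform the horizontal or the vertical decomposition, so it suffices to treat the order $\theta_v\circ\theta_h$ as above. Combining: by Proposition \ref{prop:xiss} the $p_i$ have step sequences $\xi^i$, and by Proposition \ref{prop:transss} (applied to each $p_i$, or equivalently by Definition \ref{defn:vsdecomp}) the vertical decomposition of $p_i$ gives $a$ paths with step sequences $\rho^{i,1},\ldots,\rho^{i,a}$; hence the $ab$ paths $\vartheta_{i,j}$ have step sequences $\rho^{i,j}$, as claimed.

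The main obstacle — though it is more bookkeeping than genuine difficulty — is making the identification "vertical strip decomposition acts on step sequences by $a$-fold splicing" fully precise in the presence of the transposition operation that appears in Proposition \ref{prop:transss}: there, the vertical decomposition was phrased via $\mathbf{u}(\pi)^t$ and Algorithm A applied to the transpose, whereas here $\rho^{i,j}$ is defined by direct splicing of $\xi^i$ without any transpose. I would resolve this by noting that, just as the horizontal decomposition has the two equivalent descriptions (the splicing $H^i_j = h_{(j-1)b+i}$ of Definition \ref{defn:hsdecomp} and Algorithm A of Proposition \ref{prop:vssp}), the vertical decomposition admits the analogous direct splicing description $U^i_j = u_{(j-1)a+i}$ of Definition \ref{defn:vsdecomp}, and it is this description — not the transposed Algorithm A form — that matches the definition of $\rho^{i,j}$. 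Once that equivalence is spelled out, the proposition follows immediately from Proposition \ref{prop:xiss} and the definition of $\theta_v$.
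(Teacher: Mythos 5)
Your proposal is correct and follows essentially the same route as the paper: both reduce the claim to Proposition \ref{prop:xiss} (identifying $\xi^{i}$ as the step sequence of $p_{i}$) and then identify the vertical strip decomposition of each $p_{i}$ with the $a$-fold splicing that defines $\rho^{i,j}$. The only cosmetic difference is that the paper phrases the second step via the transposition duality of Proposition \ref{prop:transss}, while you invoke the direct splicing formula of Definition \ref{defn:vsdecomp}; these are equivalent, and your explicit handling of that equivalence is if anything more careful than the paper's two-line argument.
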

%%%%%%%%%%
\begin{proof}
As in the proof of Proposition \ref{prop:transss}, the horizontal 
and vertical strip decompositions are dual to each other.
By applying Proposition \ref{prop:xiss} to the transposed vertical 
strip decomposition, we obtain weakly increasing sequences $\rho^{i,j}$ 
which are the step sequences of $\vartheta_{i,j}$.
\end{proof}

\begin{example}
Let $P=NENENE^2NE^2\in\mathfrak{D}_{2}^{(2,3)}$.
Since $\mu=123344432211$, we have three permutations 
\begin{align*}
\nu^{1}=1342, \quad \nu^2=2431, \quad \nu^3=3421.
\end{align*} 
By applying $\mu^{-1}$ to $\nu^{i}$'s, we have 
\begin{align*}
\xi^1=0112, \quad \xi^2=0011, \quad \xi^3=0001.
\end{align*}
Finally, $\rho^{i,j}$ are given by 
\begin{align*}
\begin{matrix}
\rho^{1,1}=01, & \rho^{2,1}=01, & \rho^{3,1}=00, \\
\rho^{1,2}=12, & \rho^{2,2}=01, & \rho^{3,2}=01.
\end{matrix}
\end{align*}
The integer sequences $\rho^{i,j}$ are the step sequence 
of $\vartheta(P)$ in Example \ref{ex:hvdecomp}.
\end{example}

\subsection{\texorpdfstring{$(a,b)$}{(a,b)}-Dyck paths and Dyck path decomposition}
\label{sec:dpd}
Let $\pi$ be an $(a,b)$-Dyck path.
Let $(p_1,\ldots,p_{b})$ be a horizontal strip decomposition of $\pi$.
We consider a vertical strip decomposition of $p_{i}$, and 
denote it by $(r_{i,1},\ldots,r_{i,a})$ for $1\le i\le b$.
We place $ab$ paths $r_{i,j}$ with $1\le i\le b$ and $1\le j\le a$ as 
\begin{align}
\label{eqn:r}
\begin{matrix}
r_{1,1} & r_{2,1} & \ldots  & r_{b,1} \\
r_{1,2} & r_{2,2} & \ldots  & r_{b,2} \\
 \vdots & \vdots  & \ddots  & \vdots \\
r_{1,a} & r_{2,a} & \ldots  & r_{b,a} \\
\end{matrix}
\end{align}
Then, the Dyck path decomposition is given by 
\begin{align*}
\vartheta:\pi\mapsto \{r_{i,j}: 1\le i\le b, 1\le j\le a\}.
\end{align*}

\begin{remark}
The paths $r_{i,j}$ for $i\in[1,b]$ and $j\in[1,a]$ are all lattice paths 
from $(0,0)$ to $(n,n)$.
By embedding these paths in the set paths of larger size  
by adding $N^{m}$ from left and $E^{m}$ from right for some $m>0$, 
one can regard these paths as Dyck paths of size $m+n$.
This is the reason why we call $\vartheta$ Dyck path decomposition. 
\end{remark}

\begin{prop}
Let $r_{i,j}$ be a path obtained by Dyck path decomposition of $\pi$.
Then, we have 
$r_{i,j}\le r_{i+1,j}$ and $r_{i,j+1}\le r_{i,j}$.
\end{prop}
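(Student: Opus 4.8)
The plan is to prove both inequalities by passing to step sequences. First I would record the dictionary between the partial order and coordinates: for two lattice paths with the same endpoints, $p\le p'$ (that is, the Young diagram of $p$ contains that of $p'$) holds exactly when the step sequence of $p$ dominates that of $p'$ entrywise, equivalently when the height sequence of $p$ lies entrywise below that of $p'$. This characterization makes sense for arbitrary lattice paths, so it applies to the possibly-not-Dyck paths $p_{i}$ and $r_{i,j}$. I would then invoke the facts already in hand: $\xi^{i}$ is the step sequence of $p_{i}$ (Proposition \ref{prop:xiss}), $\rho^{i,j}$ is the step sequence of $r_{i,j}$ (the preceding proposition), each $\xi^{i}$ is weakly increasing (the preceding lemma), and, directly from the construction around Definition \ref{defn:hsdecomp}, the height sequence of $p_{i}$ is $\mathbf{H}_{i}=(h_{i},h_{b+i},\ldots,h_{(n-1)b+i})$, the extraction of every $b$-th entry of $\mathbf{h}(\pi)$.

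For $r_{i,j}\le r_{i+1,j}$ with $1\le i\le b-1$, I would argue one level up and first show $p_{i}\le p_{i+1}$. Since $\mathbf{h}(\pi)$ is weakly increasing and $(j-1)b+i<(j-1)b+i+1$, we get $H^{i}_{j}=h_{(j-1)b+i}\le h_{(j-1)b+i+1}=H^{i+1}_{j}$ for all $j$, so the height sequence of $p_{i}$ lies entrywise below that of $p_{i+1}$, i.e.\ $p_{i}\le p_{i+1}$. By the dictionary this says $\xi^{i}$ dominates $\xi^{i+1}$ entrywise. Since the vertical slice reads $\rho^{i,j}_{k}=\xi^{i}_{a(k-1)+j}$ and $\rho^{i+1,j}_{k}=\xi^{i+1}_{a(k-1)+j}$ off the same positions, $\rho^{i,j}$ dominates $\rho^{i+1,j}$ entrywise, which is exactly $r_{i,j}\le r_{i+1,j}$.

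For $r_{i,j+1}\le r_{i,j}$ with $1\le j\le a-1$, I would observe that $r_{i,j}$ and $r_{i,j+1}$ are two vertical slices of the single path $p_{i}$, so $\rho^{i,j}_{k}=\xi^{i}_{a(k-1)+j}$ and $\rho^{i,j+1}_{k}=\xi^{i}_{a(k-1)+j+1}$ with $a(k-1)+j<a(k-1)+j+1$. Weak monotonicity of $\xi^{i}$ then gives $\rho^{i,j+1}_{k}\ge\rho^{i,j}_{k}$ for all $k$, so the step sequence of $r_{i,j+1}$ dominates that of $r_{i,j}$, hence $r_{i,j+1}\le r_{i,j}$. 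The only point that really needs care, which I would pin down at the outset, is the direction of the order-versus-step-sequence correspondence together with the fact that all these comparisons take place at the level of lattice paths (so they are meaningful even though the $p_{i}$ and $r_{i,j}$ need not be honest Dyck paths); beyond that the argument is just the monotonicity of $\mathbf{h}(\pi)$ and of the $\xi^{i}$, and I anticipate no further obstacle.
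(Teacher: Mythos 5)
Your proposal is correct and follows essentially the same route as the paper: the paper's proof likewise derives $p_i\le p_{i+1}$ from the weak monotonicity of $\mathbf{h}(\pi)$ and then uses the weak monotonicity of the step sequence of $p_i$ to get both inequalities for the $r_{i,j}$. Your version merely makes explicit the order-versus-sequence dictionary and the passage from $p_i\le p_{i+1}$ to the slices $r_{i,j}\le r_{i+1,j}$, which the paper leaves implicit.
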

%%%%%%%%%%
\begin{proof}
The paths $p_{i}$, $1\le i\le b$, obtained by the horizontal strip
decomposition of $\pi$, satisfy $p_i\le p_{i+1}$ since 
the height sequence of $\pi$ is weakly increasing. 
The paths $r_{i,j}$, $1\le j\le a$, are obtained by the 
vertical strip decomposition of $p_{i}$. 
The step sequence of $p_{i}$ is also weakly increasing, 
which implies that $r_{i,j}\le r_{i+1,j}$ and $r_{i,j+1}\le r_{i,j}$.
\end{proof}

We introduce two relations $\preceq_{h}$ and $\preceq_{v}$ between 
two paths as follows.
Let $r$ and $s$ be two paths from $(0,0)$ to $(n,n)$ and $s$
is above $r$.

Let $\mathbf{u}(r):=(u_1,\ldots,u_{n})$ be the step sequence of $r$.
Then, we define a path $r'$ whose step sequence  
$\mathbf{u}(r'):=(u'_{1},\ldots,u'_{n})$ is given by 
\begin{align}
\label{eqn:udash}
u'_{i}:=
\begin{cases}
0 ,& \text{if } u_{i}=0, \\
u_{i}-1 & \text{if } u_{i}\ge1 \text{\ and\ } u_{i-1}<u_{i} .
\end{cases}
\end{align}
for some $i\in[2,n]$.	
Then, we define 
\begin{align*}
r\preceq_{h}s\Rightarrow r\le s\le r'.
\end{align*}

Similarly, let $\mathbf{h}(r)$ be the height sequence of $r$.
Then, we define a path $r''$ whose height sequence 
$\mathbf{h}(r''):=(h^{''}_{1},\ldots,h^{''}_{n})$ is given by
\begin{align}
\label{eqn:hdash}
h^{''}_{i}:=
\begin{cases}
n, & \text{if } h_{i}=n, \\
h_{i}+1, & \text{if } h_{i}\le n \text{\ and \ } h_{i}<h_{i+1}.
\end{cases}
\end{align}
for some $i\in[1,n-1]$.
Then, we define 
\begin{align*}
r\preceq_{v}s\Rightarrow r\le s\le r''.
\end{align*}

We impose the two relations $\preceq_{h}$ and $\preceq_{v}$ between 
adjacent elements in (\ref{eqn:r}).
More precisely, 
we impose $r_{i,j}\preceq_{h}r_{i+1,j}$ for $i\in[1,b-1]$ and $j\in[1,a]$
and $r_{i,j+1}\preceq_{v}r_{i,j}$ for $i\in[1,b]$ and $j\in[1,a-1]$.
Furthermore, we also impose  
$r_{1,j}\preceq_{h}r_{b,j}$ for $j\in[1,a]$ and 
$r_{i,a}\preceq_{v}r_{i,1}$ for $i\in[1,b]$.

\begin{defn}
We denote by $R(\pi)$ the set of $\{r_{i,j}: 1\le i\le b, 1\le j \le a\}$ 
with relations $\preceq_{h}$ and $\preceq_{v}$ as above.
\end{defn}

\begin{remark}
The relations $\preceq_h$ and $\preceq_v$ come from the strip decomposition 
of a Dyck path.
In the strip decompositions, we take every three entries in the step or the 
height sequences.
This means that the number of the entries with the same value in $r_{i,j}$ 
differs at most one. Therefore, we have the relations for $r_{i,j}$.
\end{remark}

Let $\pi'$ be an $(a,b)$-Dyck path satisfying $\pi\le\pi'$.
Suppose the Dyck path decomposition of $\pi'$ gives 
the set $R(\pi')=\{r'_{i,j}: 1\le i\le b, 1\le j \le a\}$ with 
relations.

If $r_{i,j}\le r'_{i,j}$ for all $i\in[1,b]$ and $j\in[1,a]$, 
we denote by $R(\pi)\preceq R(\pi')$.

\begin{prop}
\label{prop:Dp}
Let $\pi$ be an $(a,b)$-Dyck path.
Then, the cardinality of the set  
\begin{align}
\label{eqn:pidash}
\{\pi': R(\pi)\preceq R(\pi')\},
\end{align}
gives the number of $(a,b)$-Dyck paths above $\pi$.
\end{prop}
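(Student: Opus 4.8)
The plan is to show that the map $\pi' \mapsto R(\pi')$ restricts to a bijection between the set of $(a,b)$-Dyck paths above $\pi$ and the set in (\ref{eqn:pidash}). Since $\vartheta$ is a bijection from $(a,b)$-Dyck paths onto the collection of arrays $R$ satisfying the relations $\preceq_h$ and $\preceq_v$ (this is the content of the preceding propositions: the step/height sequence constraints of an $(a,b)$-Dyck path translate, under the horizontal and vertical strip decompositions, exactly into the constraints packaged in the definition of $R(\pi)$, and the remark following that definition explains why the relations capture all the constraints), it suffices to prove that for two $(a,b)$-Dyck paths $\pi,\pi'$ we have $\pi \le \pi'$ if and only if $R(\pi) \preceq R(\pi')$.

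First I would establish the easy direction: if $\pi \le \pi'$, then componentwise $r_{i,j} \le r'_{i,j}$. Here I use that $\pi \le \pi'$ means $\pi$ and $\pi'$ are comparable as Young diagrams, equivalently their height sequences satisfy $\mathbf{h}(\pi) \le \mathbf{h}(\pi')$ entrywise (or the step sequences satisfy the reverse, depending on convention). Since each $r_{i,j}$ is built by selecting a fixed arithmetic progression of entries out of $\mathbf{h}(\pi)$ and then again out of the resulting sequence (via $\theta_h$ then $\theta_v$), and selecting a subsequence is monotone, entrywise domination is preserved at every stage. Hence $R(\pi) \preceq R(\pi')$.

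For the converse, suppose $R(\pi) \preceq R(\pi')$, i.e. $r_{i,j} \le r'_{i,j}$ for all $i,j$. I want to reconstruct the height sequence of each from the array and conclude $\mathbf{h}(\pi) \le \mathbf{h}(\pi')$, hence $\pi \le \pi'$. The point is that the interleaving which reassembles $\mathbf{h}(\pi)$ from the $p_i$'s (and each $p_i$'s step sequence from the $r_{i,j}$'s) is the \emph{same} shuffle pattern for $\pi$ and for $\pi'$ — it depends only on the indices, not on the path — so entrywise domination of all the pieces forces entrywise domination of the reassembled sequences. The only subtlety is to make sure the reassembly really is index-determined and not value-determined; this is exactly what the formulas $H^i_j := h_{(j-1)b+i}$ and $U^i_j := u_{(j-1)a+i}$ guarantee, and their inverses (interleaving) are likewise index-determined.

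The main obstacle I anticipate is the bookkeeping around \emph{which} of the $\pi'$ arising this way are themselves genuine $(a,b)$-Dyck paths, versus arbitrary arrays of lattice paths satisfying $\preceq_h,\preceq_v$. One must check that every array in (\ref{eqn:pidash}) that dominates $R(\pi)$ and satisfies the relations is in the image of $\vartheta$ — but this follows because $\vartheta$ was already shown to be onto the set of all relation-satisfying arrays, so nothing dominating $R(\pi)$ can escape that image. Thus the delicate part is purely the monotonicity-through-shuffles argument above, and once that is in place the proposition follows by combining it with the bijectivity of $\vartheta$.
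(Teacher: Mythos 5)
Your argument is correct in its essential content, but it takes a different route from the paper. The paper's proof works locally: it observes that the conditions (\ref{eqn:udash}) and (\ref{eqn:hdash}) defining $\preceq_{h}$ and $\preceq_{v}$ each correspond to deleting a single box from the Young diagram of a component path, and then builds the correspondence between elements of (\ref{eqn:pidash}) and paths $\pi'\ge\pi$ by tracking these single-box moves in both directions. You instead argue globally: the key observation that the decomposition $H^{i}_{j}=h_{(j-1)b+i}$, $U^{i}_{j}=u_{(j-1)a+i}$ and its inverse (interleaving) are determined purely by index positions, hence monotone with respect to entrywise domination of step/height sequences, gives directly that $\pi\le\pi'$ if and only if $r_{i,j}\le r'_{i,j}$ for all $i,j$. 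This is cleaner and arguably more robust than the paper's box-by-box argument, since it avoids any induction on the number of boxes; what it does not do is explain the role of the bounds $r\le s\le r'$ built into $\preceq_{h},\preceq_{v}$, which is the part of the structure the paper's local argument is designed to illuminate.

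One soft spot: you lean on the claim that $\vartheta$ is a bijection onto \emph{all} arrays satisfying the relations $\preceq_{h},\preceq_{v}$. The preceding propositions and the remark only establish that the image of $\vartheta$ satisfies these relations, not surjectivity, so this would need a separate argument. Fortunately, for the proposition as stated the set (\ref{eqn:pidash}) ranges over paths $\pi'$ (not over abstract arrays), so you only need injectivity of $\vartheta$ --- which is immediate from the index-determined reassembly --- together with your equivalence $\pi\le\pi'\Leftrightarrow R(\pi)\preceq R(\pi')$. If you drop the surjectivity claim and state it this way, the proof is complete.
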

%%%%%%%%%%%%%
\begin{proof}
Two conditions Eqn. (\ref{eqn:udash}) and Eqn. (\ref{eqn:hdash}) 
correspond to deleting a single box from the Young diagram 
characterized by $r$.
The new diagram is also a Young diagram characterized by 
$r'$ or $r''$.
Since we have a natural bijection between Young diagrams 
and rational Dyck paths, 
an element of the set defined in Eqn. (\ref{eqn:pidash}) 
corresponds to a rational Dyck paths $\pi'$ above $\pi$.

Conversely, if we have a rational Dyck path $\pi'$ above $\pi$,
there is a unique Young diagram corresponding to $\pi'$.
This Young diagram satisfy Eqn. (\ref{eqn:udash}) or 
Eqn. (\ref{eqn:hdash}).
\end{proof}

\subsection{Dyck tilings and Dyck path decomposition}
\label{sec:DTDpd}
In Section \ref{sec:dpd}, we consider $(a,b)$-Dyck paths 
above the path $\pi$.
In this subsection, we consider a non-trivial Dyck tiling
above $\pi$.
Here, ``non-trivial Dyck tiling" means that the tiling
contains at least one rational Dyck tiles of size $n\ge1$.
Recall that given a path $\pi$, we have the set of $ab$ 
paths, denoted by $R(\pi)$.

Let $\pi$ be a path and $Y(\pi)$ be the Young diagram characterized by the path $\pi$.
A path $r_{i,j}\in R(\pi)$ can be identified with 
the Young diagram $Y(r_{i,j})$ which is above $r_{i,j}$, right to the line $x=0$,
and below the line $y=n$.
Then, by definition of the Dyck path decomposition of $\pi$, 
there is an obvious bijection between the boxes in $Y(\pi)$
and the boxes in $Y(r_{i,j})$ for $(i,j)\in[1,b]\times[1,a]$.

Let $r$ be a lattice path from $(0,0)$ to $(n,n)$.
Suppose that $r$ contains a partial path $NE$, namely, 
a path consisting of three points $(x,y)$, $(x,y+1)$ and 
$(x+1,y+1)$.
Then, the lattice point $(x,y+1)$ is said to be 
a {\it peak} of $r$. 

Let $v$ be a peak of $r_{i,j}\in R(\pi)$, and $(x,y)$ 
is its coordinate.
Let $b(v)$ be the box in $Y(r_{i,j})$ whose center 
is $(x-1/2,y+1/2)$ if it exists.
By the bijection between the boxes in $Y(\pi)$ and $Y(r_{i,j})$,
we have a box corresponding to $b(v)$, denoted by $\overline{b(v)}$.
Note that the box $\overline{b(v)}$ is in $Y(\pi)$.
We denote by $(\overline{x}-1/2,\overline{y}+1/2)$ the coordinate of the 
center of the box $\overline{b(v)}$. 
We consider the set of lattice points $(\overline{x}+m,\overline{y}-m)$ 
with $m>0$.
There exists a unique lattice points $(\overline{x}+m,\overline{y}-m)$ 
with some $m$, which is on the $(a,b)$-Dyck path $\pi$.
If such a lattice point is a peak of $\pi$, we say that 
the lattice point $(x,y)$ (or equivalently the peak $v$) in $r_{i,j}$ 
is {\it admissible}.
Peaks which is not admissible are called {\it non-admissible} peak.

Let $\pi'$ be a path satisfying $\pi\le\pi'$.
The two paths $r_{i,j}\in R(\pi)$ and $r'_{i,j}\in R(\pi')$ 
determines a skew shape $S_{i,j}$ since $r_{i,j}\le r'_{i,j}$ from 
Proposition \ref{prop:Dp}.
We consider a $(1,1)$-Dyck tiling in the skew shape $S_{i,j}$.
Let $v$ be a non-admissible peak in $r_{i,j}$ and $(v_x,v_y)$ 
be its coordinate.
Then, we consider the condition on Dyck tilings in $S_{i,j}$ as follows:
\begin{enumerate}
\item[($\heartsuit$)] 
The box $b'$ whose center is $(v_x-1/2,y_y+1/2)$ is contained 
in a single box in $S_{i,j}$. This is equivalent to that the box $b'$
cannot be contained in a Dyck tile of size $n\ge1$.
\end{enumerate}

Let $D_{i,j}$ be a $(1,1)$-Dyck tilings in $S_{i,j}$ satisfying 
the condition ($\heartsuit$).
Then, we have the following theorem.
\begin{theorem}
The number of $(a,b)$-Dyck tilings above $\pi$ and below $\pi'$ with 
at least one non-trivial Dyck tiles is 
equal to the cardinality of the set 
\begin{align*}
\mathcal{D}:=\{ (D_{1,1},\ldots, D_{a,b}): \text{ at least one } D_{i,j} \text{ contain a non-trivail Dyck tile}
\}, 
\end{align*}
where $D_{i,j}$ is a $(1,1)$-Dyck tiling in $S_{i,j}$.
Further, there exists a bijection between non-trivial $(a,b)$-Dyck tilings above $\pi$
and the elements in $\mathcal{D}$.
\end{theorem}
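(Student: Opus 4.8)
The plan is to build the bijection explicitly using the Dyck path decomposition $\vartheta$ together with the strip-decomposition picture developed in Sections \ref{sec:dpd}--\ref{sec:DTDpd}, and then verify that it restricts to the subsets of non-trivial tilings on both sides. By Proposition \ref{prop:Dp}, the top path $\pi'$ of an $(a,b)$-Dyck tiling above $\pi$ is recorded by the data $R(\pi')$, and since $r_{i,j}\le r'_{i,j}$ for each $(i,j)\in[1,b]\times[1,a]$, the pair $(r_{i,j},r'_{i,j})$ cuts out a skew shape $S_{i,j}$. The first step is to set up the correspondence at the level of tilings: given an $(a,b)$-Dyck tiling $\mathcal{D}$ above $\pi$ with top path $\pi'$, use the bijection between the boxes of $Y(\pi)$ and the boxes of $Y(r_{i,j})$ to transport each $(a,b)$-Dyck tile of $\mathcal{D}$ to a union of boxes inside $\bigsqcup_{i,j} S_{i,j}$, and argue that, tile by tile, this union is a disjoint union of $(1,1)$-Dyck tiles $D_{i,j}$, one fragment in each skew shape it meets. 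The point here is that an $(a,b)$-enlarged Dyck tile is by definition built from an underlying $(1,1)$-Dyck path with each $N$ replaced by $N^a$ and each $E$ by $E^b$, so slicing it by the two strip decompositions returns precisely the original $(1,1)$-path (possibly truncated) in each slot; this is the same mechanism already used in Propositions \ref{prop:vssp} and \ref{prop:transss}.

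Next I would show that the tuple $(D_{1,1},\ldots,D_{a,b})$ produced this way lies in the constrained set, i.e.\ each $D_{i,j}$ is a genuine $(1,1)$-Dyck tiling of $S_{i,j}$ satisfying the peak condition $(\heartsuit)$. That a tile cannot straddle a non-admissible peak is exactly the content of the admissibility definition in Section \ref{sec:DTDpd}: a non-admissible peak $v$ of $r_{i,j}$ corresponds under the box bijection to a box $\overline{b(v)}$ of $Y(\pi)$ whose $(1,-1)$-translate does not reach a peak of $\pi$, which forces any $(a,b)$-tile covering that location to extend beyond $S_{i,j}$ — contradicting that $\mathcal{D}$ is a tiling above $\pi$ unless the box is covered by a single box. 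Conversely, for the inverse map I would start from a tuple $(D_{i,j})$ with each $D_{i,j}$ satisfying $(\heartsuit)$, read off the top paths $r'_{i,j}$, check that they assemble into a valid $R(\pi')$ for a single $(a,b)$-Dyck path $\pi'\ge\pi$ (using Proposition \ref{prop:Dp} and the $\preceq_h,\preceq_v$ relations, which are automatically respected because the strip decomposition changes box counts by at most one per slot), and then glue the fragments back into $(a,b)$-tiles; condition $(\heartsuit)$ is exactly what guarantees the fragments that must be glued across an admissible peak actually come from tiles of matching size in adjacent slots, so the gluing is well defined and the result is a legitimate $(a,b)$-Dyck tiling. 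Checking that these two maps are mutually inverse is then a matter of unwinding the box bijection.

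Finally, the bijection obviously matches up tilings: $\mathcal{D}$ contains a non-trivial $(a,b)$-Dyck tile (one of size $n\ge1$) if and only if at least one of its fragments $D_{i,j}$ contains a non-trivial $(1,1)$-Dyck tile, since a size-$n$ enlarged tile slices into pieces at least one of which has positive size, and conversely a positive-size $(1,1)$-tile in some slot can only have arisen from an $(a,b)$-tile of positive size. Hence the bijection restricts to the subset $\mathcal{D}$ on the right and the set of non-trivial $(a,b)$-Dyck tilings above $\pi$ and below $\pi'$ on the left, which gives the claimed cardinality identity and the stronger bijective statement.

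I expect the main obstacle to be the gluing step in the inverse direction: one must show that when fragments in several slots $S_{i,j}$ are forced to be joined across admissible peaks, their sizes and positions are compatible so that the glued object is a single ribbon whose box centers trace an $(a,b)$-enlarged Dyck path — in other words, that condition $(\heartsuit)$ (plus the built-in $\preceq_h,\preceq_v$ relations among the $r_{i,j}$) is not merely necessary but also sufficient for the fragments to fit together. Making this precise will likely require a careful local analysis near each admissible peak, tracking how the $(1,-1)$-translation identifies a peak of $\pi$ and how the cover-inclusion (or cover-exclusion) data propagates through the slices, analogous to the trajectory-extension bookkeeping in the proofs of Propositions \ref{prop:ssforq} and \ref{prop:dualHhDTS}.
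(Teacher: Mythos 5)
Your proposal follows essentially the same route as the paper: both use the box-by-box bijection between $Y(\pi)$ and the $Y(r_{i,j})$ coming from the Dyck path decomposition to transport $(a,b)$-Dyck tiles to tuples of $(1,1)$-Dyck tiles, and both observe that an $(a,b)$-enlarged tile slices into at least one fragment of positive size so that non-triviality is preserved in both directions. If anything you are more explicit than the paper about the role of condition $(\heartsuit)$ and about the gluing step in the inverse map, which the paper dispatches by simply asserting injectivity of the correspondence in each direction.
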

%%%%%%%%%%%%%
\begin{proof}
Recall that we have a natural bijection between a box in $Y(\pi)$ and 
a box in $r_{i,j}$ for some $i\in[1,b]$ and $j\in[1,a]$.

Suppose that we have no non-trivial Dyck tiles in $Y(\pi)$. 
Since all Dyck tiles above $\pi$ and below $\pi'$ are single boxes,
all Dyck tiles in $r_{i,j}$ are single boxes as well.

Suppose that we have at least one non-trivial Dyck tiles in $Y(\pi)$.
Recall that the shape of a non-trivial Dyck tile $d$ is an $(a,b)$-enlarged 
path.
There exists at least one pair $(i,j)$ such that $r_{i,j}$ contains 
at least three boxes corresponding to boxes in $d$.
These three boxes are a left-top box $b_0$ in $d$, a box $b$ step right to $b_0$,
and a box $a$ step below $b_{0}$ in $d$. 
In $r_{i,j}$, these three boxes are next to each other, and form a 
Dyck tile of size one.
If $r_{i,j}$ contains more than three boxes corresponding to boxes 
in $d$, it has a Dyck tile of size larger than one.
From these observations, we have at least one non-trivial Dyck tile
in $r_{i,j}$ for some $i$ and $j$ if a Dyck tiling in $Y(\pi)$ contains 
a non-trivial $(a,b)$-Dyck tile.
By construction, if this map from a Dyck tiling in $Y(\pi)$ to a Dyck 
tiling in $r_{i,j}$ is injective.

Conversely, if $r_{i,j}$ contains a non-trivial Dyck tile for some 
$i$ and $j$, we have at least one $(a,b)$-Dyck tiles in $Y(\pi)$ 
by the correspondence between boxes in $r_{i,j}$ and in $Y(\pi)$.
Again, this map from a Dyck tiling in $r_{i,j}$ to a Dyck tiling 
in $Y(\pi)$ is injective.

We have a bijection between a Dyck tiling in $Y(\pi)$ and 
$ab$ Dyck tilings $(D_{1,1},\ldots,D_{a,b})$. 
This completes the proof.
\end{proof}

\section{Weight of a cover-inclusive rational Dyck tiling}
\label{sec:weight}
In this section, we study the relation between a $(a,b)$-Dyck tilings and 
strip decompositions of them in view of the weights of Dyck tilings.
First, we consider the horizontal decomposition of $(1,b)$-Dyck tilings,
then deal with the vertical decomposition of $(a,1)$-Dyck tlings.
Finally, we apply these two cases to the general $(a,b)$-Dyck tlings.

In Sections \ref{sec:dpd} and \ref{sec:DTDpd}, we have an description 
of an $(a,b)$-Dyck tiling $\mathcal{D}$ in terms of the Dyck 
path decomposition $(r_{i,j})$ with $1\le i\le b$ and $1\le j\le a$.
There, we have used the bijection between boxes in $\mathcal{D}$
and boxes in $r_{i,j}$.
In this section, we give a different description of $\mathcal{D}$ 
in terms of $r_{i,j}$.
We make use of an Hermite history of $\mathcal{D}$ to establish 
the correspondence between $\mathcal{D}$ and $r_{i,j}$.

\subsection{\texorpdfstring{$(1,b)$}{(1,b)}-Dyck tilings and weight of a Dyck tiling}
In this subsection, we study rational Dyck tilings for $(a,b)=(1,b)$, and 
its relation to $b$-Stirling permutations.
We consider the horizontal strip decomposition of Dyck tilings.
Let $P$ be a rational Dyck path $\mathfrak{D}_{n}^{(1,b)}$ and 
$\mathfrak{p}:=(p_1,\ldots,p_b)$ be lattice paths obtained from the horizontal 
strip decomposition.
We denote by $\mathcal{D}$ a $(1,b)$-Dyck tilling above $P$.
From Section \ref{sec:dpd}, the lattice paths $p_i$, $1\le i\le b$, satisfy 
$p_i\preceq_{h}p_{i+1}$ for $1\le i\le b-1$ and $p_1\preceq_{h}p_{b}$.

Let $\mathcal{D}_i$ be a Dyck tiling above $p_{i}$, and $\nu(\mathcal{D}_{i})$ 
be the post-order word obtained from $\mathcal{D}_{i}$ by the DTS bijection 
(see Section \ref{sec:DTS}).
We denote $t_i$ be a top path of the Dyck tiling $\mathcal{D}_{i}$.
Since the top paths $t_i$, $1\le i\le b$, also form a top path of $\mathcal{D}$,
we have $t_i\preceq_{h}t_{i+1}$. 
We denote by $\le$ the lexicographic order of words. The order for alphabets 
is $1<2<3\ldots<n$.
%%%%%%%%%%%
\begin{lemma}
\label{lemma:hnu}
Suppose $p_i\preceq_{h}p_{i+1}$ and $t_{i}=t_{i+1}$.
We have $\nu(\mathcal{D}_{i+1})\le \nu(\mathcal{D}_{i})$ if both Dyck tilings consist
of single Dyck tiles.
\end{lemma}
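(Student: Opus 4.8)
The plan is to translate the hypothesis $p_i \preceq_h p_{i+1}$ together with $t_i = t_{i+1}$ into a concrete relation between the step sequences, and then track how the DTS post-order words are built up. Recall that $p_i \preceq_h p_{i+1}$ means $p_i \le p_{i+1} \le p_i'$, where $p_i'$ is obtained from $p_i$ by the box-deleting operation of Eqn.~(\ref{eqn:udash}); since also $t_i = t_{i+1}$ and both tilings consist of single boxes, Proposition~\ref{prop:ssforq} (or rather its DTS analogue via Proposition~\ref{prop:dualHhDTS}) pins down the tiling $\mathcal{D}_{i+1}$ as essentially ``one box more'' than $\mathcal{D}_i$ in a controlled region. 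First I would make precise, using the step-sequence description, exactly which chord of $p_{i+1}$ carries a label one larger (or smaller, in the DTS convention) than the corresponding chord of $p_i$, exploiting that the difference $p_i' - p_i$ is supported on the columns where $p_i$ strictly increases.

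\medskip

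Next I would set up the comparison of post-order words directly. Since $p_i$ and $p_{i+1}$ differ by the $\preceq_h$ relation, their chord structures are almost identical; I would argue that the post-order reading positions match up chord-by-chord except for the one extra box, so that $\nu(\mathcal{D}_{i+1})$ and $\nu(\mathcal{D}_i)$ agree in all but one ``slot,'' and in that slot the value for $\mathcal{D}_{i+1}$ is no larger. Concretely, the DTS labels are decreasing from left-top to right-bottom (the Remark after Example~\ref{ex:DTS}), and the extra box in $p_{i+1}$ extends a trajectory, which \emph{decreases} the relevant DTS label; I would show this decrease happens at a position that, in the lexicographic comparison, forces $\nu(\mathcal{D}_{i+1}) \le \nu(\mathcal{D}_i)$. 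The key structural input is that when both tilings have only single boxes, the exchanged labels $m, m+1$ are adjacent chords (exactly as in the proof of Proposition~\ref{prop:ssforq}), so the perturbation to the word is a single adjacent transposition of consecutive integers, in the direction that lowers the word lexicographically.

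\medskip

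The main obstacle I anticipate is bookkeeping the relationship between the post-order reading of $p_i$'s chords and that of $p_{i+1}$'s chords: because $p_{i+1}$ may sit strictly above $p_i$, a chord that was ``bottom'' for $p_i$ can become ``left'' or ``right'' for $p_{i+1}$, so the raw sequences of positions are not literally the same. I would handle this by checking that the $\preceq_h$ constraint is tight enough — it permits at most a ``one box'' change in each entry of the step sequence — that the post-order traversal order of chords is preserved, so that the comparison reduces to comparing label-values chord by chord. Once that is established, the lexicographic inequality follows because the first chord (in post-order) at which the two words differ receives a strictly smaller label under $\mathcal{D}_{i+1}$ than under $\mathcal{D}_i$, by the trajectory-extension argument. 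A secondary subtlety is confirming that the hypothesis ``both consist of single Dyck tiles'' is genuinely used to rule out the case where a non-adjacent pair of labels would be exchanged — this is exactly the point where the proof of Proposition~\ref{prop:ssforq} invoked the no-non-trivial-tiles condition, and I would cite that argument rather than repeat it.
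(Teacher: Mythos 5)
Your plan follows essentially the same route as the paper's proof: both reduce the hypothesis $p_i\preceq_h p_{i+1}$ with equal top paths to a single-box change that exchanges a pair of adjacent, consecutive labels in the Hermite history (using the single-tile hypothesis exactly as in the proof of Proposition~\ref{prop:ssforq}), then pass to DTS labels via the duality of Proposition~\ref{prop:dualHhDTS} and compare the post-order words lexicographically. Your extra attention to whether the post-order traversal of chords is preserved under the $\preceq_h$ perturbation is a point the paper glosses over, but it does not change the argument.
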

%%%%%%%%%%%
\begin{proof}

Let $\mathfrak{h}:=(h_1,\ldots,h_{n})$ 
be an integer sequences such that $h_{j}$ is the label on the $j$-th chord in an Hermite history.
We denote by $\mathfrak{h}$ and $\mathfrak{h'}$ the integer sequences for $\mathcal{D}_{i}$
and $\mathcal{D}_{i}$ respectively.

The condition that $p_{i}\preceq_{h}p_{i+1}$ means that we have some $j$
such that $h_{j}'=h_{j}-1$. 
This is because that the top paths of $\mathcal{D}_{i}$ and $\mathcal{D}_{i+1}$ are 
the same.  
Since we read the word by post-order, we have the same word, or 
we exchange $k$ and $k'$ in the Hermite history such that 
$k'$ is the maximal integer smaller than $k$ and left to $k$.
From Proposition \ref{prop:dualHhDTS}, a chord has a label $n+1-j$ in the 
DTS bijection if the same chord has a label $j$ in the Hermite history.
Thus, if we read the post-order word in the DTS bijection, 
$\nu(\mathcal{D}_{i})\ge\nu(\mathcal{D}_{i+1})$.
\end{proof}

\begin{example}
In Figure \ref{fig:DT1}, we show some examples of Dyck tiling with the top path $N^3ENE^3$ and 
without non-trivial Dyck tiles.
\begin{figure}[ht]
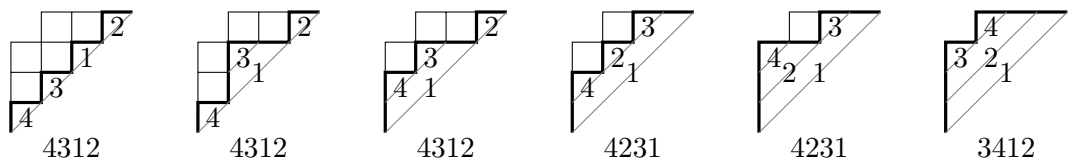

\tikzpic{-0.5}{[scale=0.4]
\draw[very thick](0,0)--(0,1)--(1,1)--(1,2)--(2,2)--(2,3)--(3,3)--(3,4)--(4,4);
\draw(0,1)--(0,3)--(2,3)--(2,4)(0,2)--(1,2)--(1,4)--(3,4);
\draw[gray](0,0)--(4,4);
\draw(0.5,0.5)node{$4$}(1.5,1.5)node{$3$}(2.5,2.5)node{$1$}(3.5,3.5)node{$2$};
\draw(2,-0.5)node{$4312$};
}\quad
\tikzpic{-0.5}{[scale=0.4]
\draw[very thick](0,0)--(0,1)--(1,1)--(1,3)--(3,3)--(3,4)--(4,4);
\draw(0,1)--(0,3)--(1,3)--(1,4)--(3,4)(0,2)--(1,2)(2,3)--(2,4);
\draw[gray](0,0)--(4,4)(1,2)--(2,3);
\draw(0.5,0.5)node{$4$}(1.5,2.5)node{$3$}(2,2)node{$1$}(3.5,3.5)node{$2$};
\draw(2,-0.5)node{$4312$};
}\quad
\tikzpic{-0.5}{[scale=0.4]
\draw[very thick](0,0)--(0,2)--(1,2)--(1,3)--(3,3)--(3,4)--(4,4);
\draw(0,2)--(0,3)--(1,3)--(1,4)--(3,4)(2,3)--(2,4);
\draw[gray](0,0)--(4,4)(0,1)--(2,3);
\draw(1.5,1.5)node{$1$}(1.5,2.5)node{$3$}(0.5,1.5)node{$4$}(3.5,3.5)node{$2$};
\draw(2,-0.5)node{$4312$};
}\quad
\tikzpic{-0.5}{[scale=0.4]
\draw[very thick](0,0)--(0,2)--(1,2)--(1,3)--(2,3)--(2,4)--(4,4);
\draw(0,2)--(0,3)--(1,3)--(1,4)--(2,4);
\draw[gray](0,0)--(4,4)(0,1)--(3,4);
\draw(2,2)node{$1$}(0.5,1.5)node{$4$}(1.5,2.5)node{$2$}(2.5,3.5)node{$3$};
\draw(2,-0.5)node{$4231$};
}\quad
\tikzpic{-0.5}{[scale=0.4]
\draw[very thick](0,0)--(0,3)--(2,3)--(2,4)--(4,4);
\draw(1,3)--(1,4)--(2,4);
\draw[gray](0,0)--(4,4)(0,1)--(3,4)(0,2)--(1,3);
\draw(0.5,2.5)node{$4$}(1,2)node{$2$}(2,2)node{$1$}(2.5,3.5)node{$3$};
\draw(2,-0.5)node{$4231$};
}\quad
\tikzpic{-0.5}{[scale=0.4]
\draw[very thick](0,0)--(0,3)--(1,3)--(1,4)--(4,4);
\draw[gray](0,0)--(4,4)(0,1)--(3,4)(0,2)--(2,4);
\draw(0.5,2.5)node{$3$}(1.5,2.5)node{$2$}(2,2)node{$1$}(1.5,3.5)node{$4$};
\draw(2,-0.5)node{$3412$};
}
\caption{Examples of labels for Dyck tlings with the fixed top path.}
\label{fig:DT1}
\end{figure}
\end{example}

\begin{remark}
We remark that if a Dyck tiling contains Dyck tiles of size larger than 
zero, we have similar properties to Lemma \ref{lemma:hnu} (see Figure \ref{fig:ntDt}).
However, the comparison between a Dyck tiling without non-trivial Dyck tiles
and a Dyck tiling with non-trivial Dyck tiles is not obvious.
\end{remark}

%%%%%%%%%%%%%
\begin{example}
We consider the $(1,3)$-Dyck path $\lambda$ with the step sequence $(0,2,2,8)$.
We have eight non-trivial Dyck tilings.
For example, three of them are depicted in Figure \ref{fig:hsdecomo}.
%%%%%%%%%%%%%%%%%%
\begin{figure}[ht]
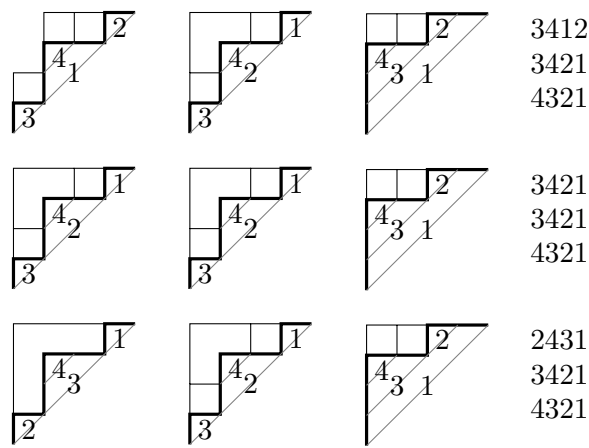

\begin{align*}
\tikzpic{-0.5}{[scale=0.4]
\draw[very thick](0,0)--(0,1)--(1,1)--(1,3)--(3,3)--(3,4)--(4,4);
\draw(0,1)--(0,2)--(1,2)(1,3)--(1,4)--(3,4)(2,3)--(2,4);
\draw[gray](0,0)--(4,4)(1,2)--(2,3);
\draw(0.5,0.5)node{$3$}(1.5,2.5)node{$4$}(2,2)node{$1$}(3.5,3.5)node{$2$};
}\quad
\tikzpic{-0.5}{[scale=0.4]
\draw[very thick](0,0)--(0,1)--(1,1)--(1,3)--(3,3)--(3,4)--(4,4);
\draw(0,1)--(0,2)--(1,2)(2,3)--(2,4)--(3,4)(0,2)--(0,4)--(2,4);
\draw[gray](0,0)--(4,4)(1,2)--(2,3);
\draw(0.5,0.5)node{$3$}(1.5,2.5)node{$4$}(2,2)node{$2$}(3.5,3.5)node{$1$};
}\quad
\tikzpic{-0.5}{[scale=0.4]
\draw[very thick](0,0)--(0,3)--(2,3)--(2,4)--(4,4);
\draw(0,3)--(0,4)--(2,4)(1,3)--(1,4);
\draw[gray](0,0)--(4,4)(0,1)--(3,4)(0,2)--(1,3);
\draw(0.5,2.5)node{$4$}(1,2)node{$3$}(2.5,3.5)node{$2$}(2,2)node{$1$};
}\quad
\begin{matrix}
3412 \\
3421 \\
4321
\end{matrix}
\end{align*}
\begin{align*}
\tikzpic{-0.5}{[scale=0.4]
\draw[very thick](0,0)--(0,1)--(1,1)--(1,3)--(3,3)--(3,4)--(4,4);
\draw(0,1)--(0,2)--(1,2)(2,3)--(2,4)--(3,4)(0,2)--(0,4)--(2,4);
\draw[gray](0,0)--(4,4)(1,2)--(2,3);
\draw(0.5,0.5)node{$3$}(1.5,2.5)node{$4$}(2,2)node{$2$}(3.5,3.5)node{$1$};
}\quad
\tikzpic{-0.5}{[scale=0.4]
\draw[very thick](0,0)--(0,1)--(1,1)--(1,3)--(3,3)--(3,4)--(4,4);
\draw(0,1)--(0,2)--(1,2)(2,3)--(2,4)--(3,4)(0,2)--(0,4)--(2,4);
\draw[gray](0,0)--(4,4)(1,2)--(2,3);
\draw(0.5,0.5)node{$3$}(1.5,2.5)node{$4$}(2,2)node{$2$}(3.5,3.5)node{$1$};
}\quad
\tikzpic{-0.5}{[scale=0.4]
\draw[very thick](0,0)--(0,3)--(2,3)--(2,4)--(4,4);
\draw(0,3)--(0,4)--(2,4)(1,3)--(1,4);
\draw[gray](0,0)--(4,4)(0,1)--(3,4)(0,2)--(1,3);
\draw(0.5,2.5)node{$4$}(1,2)node{$3$}(2.5,3.5)node{$2$}(2,2)node{$1$};
}\quad
\begin{matrix}
3421\\
3421 \\
4321
\end{matrix}
\end{align*}
\begin{align*}
\tikzpic{-0.5}{[scale=0.4]
\draw[very thick](0,0)--(0,1)--(1,1)--(1,3)--(3,3)--(3,4)--(4,4);
\draw(0,1)--(0,4)--(3,4);
\draw[gray](0,0)--(4,4)(1,2)--(2,3);
\draw(0.5,0.5)node{$2$}(1.5,2.5)node{$4$}(2,2)node{$3$}(3.5,3.5)node{$1$};
}\quad
\tikzpic{-0.5}{[scale=0.4]
\draw[very thick](0,0)--(0,1)--(1,1)--(1,3)--(3,3)--(3,4)--(4,4);
\draw(0,1)--(0,2)--(1,2)(2,3)--(2,4)--(3,4)(0,2)--(0,4)--(2,4);
\draw[gray](0,0)--(4,4)(1,2)--(2,3);
\draw(0.5,0.5)node{$3$}(1.5,2.5)node{$4$}(2,2)node{$2$}(3.5,3.5)node{$1$};
}\quad
\tikzpic{-0.5}{[scale=0.4]
\draw[very thick](0,0)--(0,3)--(2,3)--(2,4)--(4,4);
\draw(0,3)--(0,4)--(2,4)(1,3)--(1,4);
\draw[gray](0,0)--(4,4)(0,1)--(3,4)(0,2)--(1,3);
\draw(0.5,2.5)node{$4$}(1,2)node{$3$}(2.5,3.5)node{$2$}(2,2)node{$1$};
}\quad
\begin{matrix}
2431\\
3421 \\
4321
\end{matrix}
\end{align*}
\caption{Dyck paths and permutations}
\label{fig:hsdecomo}
\end{figure}
\end{example}

Let $p$ be a $(1,1)$-Dyck path. We consider $(1,1)$-Dyck tilings with 
non-trivial Dyck tiles. 
Suppose that a Dyck tiling $D_0$ contains a Dyck tile of size $m$, and 
a Dyck tiling $D_1$ contains a Dyck tile of size $m'$ with $m'<m$.
Then, the post-order words $\nu(D_0)$ and $\nu(D_1)$ satisfy 
$\nu(D_0)<\nu(D_1)$ in the lexicographic order.
Figure \ref{fig:ntDt} shows an example.
\begin{figure}[ht]
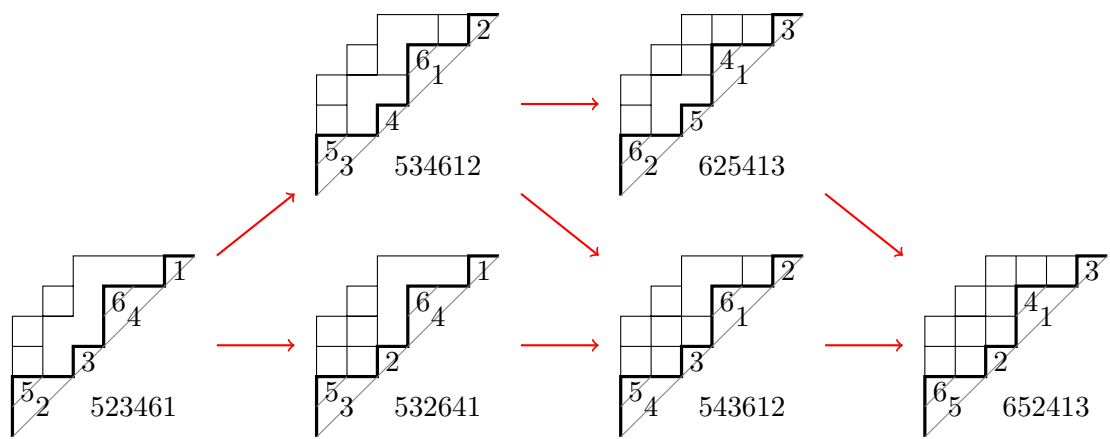

\tikzpic{-0.5}{[scale=4]
\node (origin) at (0,0) {
\tikzpic{-0.5}{[scale=0.4]
\draw[very thick](0,0)--(0,2)--(2,2)--(2,3)--(3,3)--(3,5)--(5,5)--(5,6)--(6,6);
\draw(0,2)--(0,4)--(2,4)--(2,6)--(5,6)(1,2)--(1,4)--(2,4)(0,3)--(1,3)(1,4)--(1,5)--(2,5);
\draw[gray](0,0)--(6,6)(0,1)--(1,2)(3,4)--(4,5);
\draw(1,1)node{$2$}(0.5,1.5)node{$5$}(2.5,2.5)node{$3$}(4,4)node{$4$}(3.5,4.5)node{$6$}
(5.5,5.5)node{$1$};
\draw(4,1)node{$523461$};
}
};
\node (dg2) at (1,0.8) {
\tikzpic{-0.5}{[scale=0.4]
\draw[very thick](0,0)--(0,2)--(2,2)--(2,3)--(3,3)--(3,5)--(5,5)--(5,6)--(6,6);
\draw(0,2)--(0,4)--(2,4)--(2,6)--(5,6)(1,2)--(1,4)--(2,4)(0,3)--(1,3)(1,4)--(1,5)--(2,5);
\draw(2,4)--(3,4)(4,5)--(4,6);
\draw[gray](0,0)--(6,6)(0,1)--(1,2)(3,4)--(4,5);
\draw(1,1)node{$3$}(0.5,1.5)node{$5$}(2.5,2.5)node{$4$}(4,4)node{$1$}(3.5,4.5)node{$6$}
(5.5,5.5)node{$2$};
\draw(4,1)node{$534612$};
}
};
\node (dg3) at (2,0.8) {
\tikzpic{-0.5}{[scale=0.4]
\draw[very thick](0,0)--(0,2)--(2,2)--(2,3)--(3,3)--(3,5)--(5,5)--(5,6)--(6,6);
\draw(0,2)--(0,4)--(2,4)--(2,6)--(5,6)(1,2)--(1,4)--(2,4)(0,3)--(1,3)(1,4)--(1,5)--(2,5);
\draw(2,4)--(3,4)(2,5)--(3,5)(3,5)--(3,6)(4,5)--(4,6);
\draw[gray](0,0)--(6,6)(0,1)--(1,2)(3,4)--(4,5);
\draw(1,1)node{$2$}(0.5,1.5)node{$6$}(2.5,2.5)node{$5$}(4,4)node{$1$}(3.5,4.5)node{$4$}
(5.5,5.5)node{$3$};
\draw(4,1)node{$625413$};
}
};
\node (dg4) at (2,0) {
\tikzpic{-0.5}{[scale=0.4]
\draw[very thick](0,0)--(0,2)--(2,2)--(2,3)--(3,3)--(3,5)--(5,5)--(5,6)--(6,6);
\draw(0,2)--(0,4)--(2,4)--(2,6)--(5,6)(1,2)--(1,4)--(2,4)(0,3)--(1,3)(1,4)--(1,5)--(2,5);
\draw(1,3)--(2,3)--(2,4)--(3,4)(4,5)--(4,6);
\draw[gray](0,0)--(6,6)(0,1)--(1,2)(3,4)--(4,5);
\draw(1,1)node{$4$}(0.5,1.5)node{$5$}(2.5,2.5)node{$3$}(4,4)node{$1$}(3.5,4.5)node{$6$}
(5.5,5.5)node{$2$};
\draw(4,1)node{$543612$};
}
};
\node (dg5) at (1,-0) {
\tikzpic{-0.5}{[scale=0.4]
\draw[very thick](0,0)--(0,2)--(2,2)--(2,3)--(3,3)--(3,5)--(5,5)--(5,6)--(6,6);
\draw(0,2)--(0,4)--(2,4)--(2,6)--(5,6)(1,2)--(1,4)--(2,4)(0,3)--(1,3)(1,4)--(1,5)--(2,5);
\draw(1,3)--(2,3)--(2,4);
\draw[gray](0,0)--(6,6)(0,1)--(1,2)(3,4)--(4,5);
\draw(1,1)node{$3$}(0.5,1.5)node{$5$}(2.5,2.5)node{$2$}(4,4)node{$4$}(3.5,4.5)node{$6$}
(5.5,5.5)node{$1$};
\draw(4,1)node{$532641$};
}
};
\node (dg6) at (3,0) {
\tikzpic{-0.5}{[scale=0.4]
\draw[very thick](0,0)--(0,2)--(2,2)--(2,3)--(3,3)--(3,5)--(5,5)--(5,6)--(6,6);
\draw(0,2)--(0,4)--(2,4)--(2,6)--(5,6)(1,2)--(1,4)--(2,4)(0,3)--(1,3)(1,4)--(1,5)--(2,5);
\draw(1,3)--(2,3)--(2,4)--(3,4)(2,5)--(3,5)--(3,6)(4,5)--(4,6);
\draw[gray](0,0)--(6,6)(0,1)--(1,2)(3,4)--(4,5);
\draw(1,1)node{$5$}(0.5,1.5)node{$6$}(2.5,2.5)node{$2$}(4,4)node{$1$}(3.5,4.5)node{$4$}
(5.5,5.5)node{$3$};
\draw(4,1)node{$652413$};
}
};
\draw[red,thick,->](origin)--(dg2);
\draw[red,thick,->](origin)--(dg5);
\draw[red,thick,->](dg2)--(dg3);
\draw[red,thick,->](dg3)--(dg6);
\draw[red,thick,->](dg2)--(dg4);
\draw[red,thick,->](dg5)--(dg4);
\draw[red,thick,->](dg4)--(dg6);
}
\caption{Lexicographic order of the post-order words with non-trivial Dyck tiles
with the lowest path $N^2E^2NEN^2E^2NE$.}
\label{fig:ntDt}
\end{figure}
The red lines indicate the order of two elements.

We construct a $(1,b)$-Dyck tiling $\mathcal{D}$ from $b$ Dyck tilings 
$\mathcal{D}_{i}$, $1\le i\le b$.
Recall that an Hermite history consists of trajectories 
starting from the $N$ step in the lowest Dyck path (see Section \ref{sec:Hh}).
We define a non-negative integer sequences $g^{i}:=(g^i_1,\ldots,g^i_n)$, 
where $g^i_j$ is the $l(S_N)-1$ for the $j$-th $N$ step $S_{N}$ in $\mathcal{D}_{i}$.
We define 
\begin{align*}
G_j(\mathcal{D}_1,\ldots,\mathcal{D}_b)
:=\sum_{1\le i\le b}g^{i}_{j},
\end{align*}
where $1\le j \le n$.

We define a $(1,b)$-Dyck tiling $\mathcal{D}$ such that 
the Hermite history of $\mathcal{D}$ is $(G_1,\ldots,G_{n})$.

We abbreviate $\nu(\mathcal{D}_{i})$ as $\nu^{i}$.
We denote by $N(\nu^1,\ldots,\nu^{n})$ the sum of entries in $\mu(\nu^{i})$ for 
$1\le i\le b$. 
Let $Y(P)$ be the sum of entries of step sequence of the lowest 
path $P$.

\begin{prop}
\label{prop:wtD}
We have 
\begin{align}
\label{eqn:wtD}
\mathrm{wt}(\mathcal{D})=Y(P)-N(\nu^1,\ldots,\nu^{n}).
\end{align}
\end{prop}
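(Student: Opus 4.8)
The plan is to prove \eqref{eqn:wtD} by relating the weight of $\mathcal{D}$ to the Hermite-history data of $\mathcal{D}$, and then comparing that with the sum over the $b$ sub-tilings $\mathcal{D}_i$. First I would recall the general fact for $(1,1)$-Dyck tilings (implicit in Section~\ref{sec:Hh}, cf.\ \cite{KMPW12,S19}) that if $D$ is a $(1,1)$-Dyck tiling above a path $p$ and $q$ is its top path, then $\mathrm{wt}(D)$ equals the difference between the total ``depth'' recorded by the Hermite history and a correction term coming from $p$ and $q$; more precisely, $\sum_{S_N} l(S_N)$, summed over $N$-steps of $p$, counts $\sum_{d}(\mathrm{size}(d)+1)=\mathrm{wt}(d)$ contributions along trajectories, so $\mathrm{wt}(D)=\sum_j g_j$ where $g_j=l(S_N)-1$ for the $j$-th $N$-step. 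For the $(1,b)$ case, the weight of a $(1,b)$-Dyck tile characterized by an enlarged path of size $n$ is $\mathrm{art}(d)=bn+1$, and under the horizontal strip decomposition such a tile breaks into pieces distributed among the $D_i$; the key combinatorial input (from Section~\ref{sec:DTDpd}, the bijection between boxes of $Y(\pi)$ and boxes of $r_{i,j}$, here $a=1$) is that the trajectory attached to the $j$-th $N$-step of $P$ decomposes compatibly into the trajectories attached to the $j$-th $N$-steps of the $p_i$, so that $l_{\mathcal{D}}(S_N^{(j)})-1 = G_j(\mathcal{D}_1,\dots,\mathcal{D}_b)=\sum_i g^i_j$. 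This is exactly the definition used to \emph{construct} $\mathcal{D}$, so by the first observation $\mathrm{wt}(\mathcal{D})=\sum_{j=1}^n G_j=\sum_{i=1}^b\sum_{j=1}^n g^i_j$.

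Next I would translate the right-hand side of \eqref{eqn:wtD} into the same quantity. By Proposition~\ref{prop:ssforq} applied to each $\mathcal{D}_i$, the step sequence of the top path $t_i$ is $\mathfrak{u}(p_i)-\mathfrak{h}(\omega(\mathcal{D}_i))$; summing the entries, $Y(t_i)=Y(p_i)-\sum_k h_k(\omega(\mathcal{D}_i))$. On the other hand, $\mathrm{wt}(\mathcal{D}_i)=\sum_j g^i_j$ and one checks (again from the Hermite-history bookkeeping, using $h_1(c)+h_2(c)=n+1$ from Proposition~\ref{prop:dualHhDTS}) that $\sum_j g^i_j$ differs from $\sum_k \mathfrak{h}(\omega(\mathcal{D}_i))_k$ by a term depending only on the lowest and top paths. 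The point of introducing $N(\nu^1,\dots,\nu^b)=\sum_i (\text{sum of entries of }\mu(\nu^i))$ is that $\mu(\nu^i)=\mu^{-1}(\mathcal{D}_i)$-type data: the sum of entries of the insertion history $\mu(\nu^i)$ equals $\sum_k h_2(c_k)$ in the DTS labelling, which by the duality \eqref{eqn:dualHhDTS} equals $n(n+1) - \sum_k h_1(c_k)$ where $h_1$ is the Hermite label. Combining, $\sum_i\sum_j g^i_j = \sum_i\bigl(\text{something}(p_i) - \text{sum of }\mu(\nu^i)\bigr)$, and summing the $\text{something}(p_i)$ over $i$ recovers precisely $Y(P)$ because the horizontal strip decomposition reconstitutes the step sequence of $P$ from those of the $p_i$ (Algorithm~A / Proposition~\ref{prop:vssp}). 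That yields $\mathrm{wt}(\mathcal{D})=Y(P)-N(\nu^1,\dots,\nu^b)$.

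So the skeleton is: (1) express $\mathrm{wt}(\mathcal{D})$ via its Hermite history as $\sum_{i,j} g^i_j$; (2) express each $g^i_j$-sum via Proposition~\ref{prop:ssforq} and the Hh/DTS duality (Proposition~\ref{prop:dualHhDTS}) in terms of $Y(p_i)$ and the sum of entries of $\mu(\nu^i)$; (3) sum over $i$ and use that the horizontal strip decomposition is additive on step sequences to collapse $\sum_i Y(p_i)$-type terms into $Y(P)$. I expect the main obstacle to be step (2): carefully matching ``the sum of the $l(S_N)$'' (a trajectory/area count) with ``the sum of the insertion-history entries $\mathfrak{h}(\omega(\mathcal{D}_i))$'' (a permutation statistic) and then with ``the sum of entries of $\mu(\nu^i)$'' (a $b$-Stirling-permutation statistic), i.e.\ checking that the constant/correction terms genuinely cancel and that the lexicographic/ordering constraints of Lemma~\ref{lemma:hnu} guarantee the decomposition of $\mathcal{D}$ into the $\mathcal{D}_i$ is the right one. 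The inductive argument of Proposition~\ref{prop:ssforq} — tracking what happens to all these statistics when a single box is added — is the natural tool to push through this cancellation cleanly.
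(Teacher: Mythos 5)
Your overall skeleton --- (1) the weight of $\mathcal{D}$ is additive over the strips via the Hermite-history data $g^i_j$, (2) a per-strip identity $\mathrm{wt}(\mathcal{D}_i)=Y(p_i)-\sum_j\mu(\nu^i)_j$, (3) additivity of step sequences $\sum_i Y(p_i)=Y(P)$ under the horizontal decomposition --- is sound and checks out on the paper's examples, and steps (1) and (3) are fine. But step (2), which you yourself flag as the main obstacle, contains a concrete error: you assert that the sum of the entries of the insertion history $\mu(\nu^i)$ equals $\sum_k h_2(c_k)$, the sum of the DTS labels on the chords. The DTS labels form a permutation of $[1,n]$, so $\sum_k h_2(c_k)=n(n+1)/2$ is a constant independent of the tiling, whereas $\sum_j\mu(\nu^i)_j$ genuinely varies with $\mathcal{D}_i$ (it is $0$ for $\nu^i=n\,(n-1)\cdots 1$ and positive otherwise). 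Consequently the chord-wise duality $h_1(c)+h_2(c)=n+1$ of Proposition \ref{prop:dualHhDTS} gives no leverage here: it relates labels chord by chord, and the insertion history is not a label-wise function of the labels, so you cannot pass from the identity $\mathfrak{u}(p_i)-\mathfrak{h}(\omega(\mathcal{D}_i))=\mathfrak{u}(t_i)$ of Proposition \ref{prop:ssforq} to a statement about $\mu^{-1}(\nu^i)$ by ``complementing.'' A secondary issue is that Proposition \ref{prop:ssforq} is stated only for tilings without non-trivial Dyck tiles, while Proposition \ref{prop:wtD} must cover tilings containing them (the example that follows it does), so even a corrected step (2) needs a separate argument for that case.

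The paper avoids both problems by inducting directly on single-box additions to the top path: in the base case (empty tiling) the pre-order word is the identity and the insertion histories of the post-order words $\nu^i$ reassemble into the step sequence of $P$, giving $N(\nu^1,\ldots,\nu^b)=Y(P)$ and $\mathrm{wt}(\mathcal{D})=0$; then each addition of a single box (or each unit increase in the size of a non-trivial tile) changes exactly one entry of one $\mu(\nu^i)$ by one while changing the weight by one in the compensating direction. If you want to keep your cleaner strip-by-strip organization, the repair is to prove the per-strip ($b=1$) identity by that same one-box-at-a-time induction on $\nu^i$, rather than by routing through Propositions \ref{prop:ssforq} and \ref{prop:dualHhDTS}.
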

%%%%%%%%%%
\begin{proof}
We prove Eqn. (\ref{eqn:wtD}) by induction.
We first show that Eqn. (\ref{eqn:wtD}) holds when 
the Dyck tiling $\mathcal{D}$ has no Dyck tiles.
If we read the words consisting of the labels on chords of the lowest path $P$ 
in $\mathcal{D}$ in the pre-order, we have the identity permutation.  
We use the post-order words to obtain $\nu^{i}$. 
We denote by $\mathbf{u}(p_i):=(u_1,\ldots,u_n)$ the step sequence of $p_i$.
Suppose $u_{i-1}=u_{i}$. 
Then, the integer $i$ appears just before $i-1$ in the post-order words.
Thus in $\nu^{i}$, the insertion history $\mu(\nu^{i}):=(\mu_1,\ldots,\mu_n)$ 
satisfies $\mu_{i-1}=\mu_{i}$.
Suppose $u_{i-1}<u_{i}$. By a similar argument, we have $\mu_{i-1}<\mu_{i}$.
From these observations, $h_{i}$ is the sum of the numbers of boxes 
left to the chord with label $i$.
Therefore, the insertion history is nothing but the step sequence of 
$P$. We obtain $\mathrm{wt}(\mathcal{D})=0$ since $N(\lambda)=Y(P)$.

We assume that Eqn. (\ref{eqn:wtD}) is true for all Dyck tiling $\mathcal{D}'$ 
with the top path $Q'$. We show Eqn. (\ref{eqn:wtD}) is also true for 
$Q$ such that the number of boxes in $Y(Q)$ is one plus the number 
of boxes in $Y(Q')$.
We denote by $\mathcal{D}$ the Dyck tiling in $Y(Q)$. 
In $\mathcal{D}_{i}$ for some $i$, the number of boxes is increased by one,
or the size of a non-trivial Dyck tile is increased by one.
Then, it is obvious that $N(\lambda(\mathcal{D}'))$ is increased by one from the construction 
of the post-order words. We have $N(\lambda(\mathcal{D}))=N(\lambda(\mathcal{D}'))+1$
We obtain $\mathrm{wt}(\mathcal{D})=Y(P)-N(\lambda(\mathcal{D}))=\mathrm{wt}(\mathcal{D})+1$.
This completes the proof.
\end{proof}

\begin{example}
We consider the third example in Figure \ref{fig:hsdecomo}.
The corresponding $(1,3)$-Dyck tiling is depicted as 
\begin{center}
\tikzpic{-0.5}{[scale=0.5]
\draw[very thick](0,0)--(0,1)--(2,1)--(2,3)--(8,3)--(8,4)--(12,4);
\draw(1,1)--(1,4)--(8,4);
\draw(0,1)--(0,3)--(1,3)(0,2)--(1,2);
}
\end{center}
Note that $g^1=(0,0,0,3)$, $g^2=(0,1,0,3)$ and $g^3=(0,0,0,2)$ give
$G=(G_1,G_2,G_3,G_4)=(0,1,0,8)$.
The step sequence of the lowest path is $(0,2,2,8)$, which 
implies $Y(p)=12$.
Since $\mu(2431)=(0,0,1,1)$, $\mu(3421)=(0,0,0,1)$ and 
$\mu(4321)=(0,0,0,0)$, we obtain $N(\nu^1,\nu^2,\nu^3)=3$.
Thus, the weight of the $(1,3)$-Dyck tiling is $12-3=9$.
\end{example}

\subsection{\texorpdfstring{$(a,1)$}{(a,1)}-Dyck tilings and weight of a Dyck tiling}
In this subsection, we study the relation between $(a,1)$-Dyck tilings and 
$a$-Stirling permutations. 
The results are parallel to those of $(1,b)$-Dyck tilings by changing $a$ and $b$.
However, the statistics $\mathrm{art}(\mathcal{D})$ for a Dyck tiling is not 
preserved by the exchange of $a$ and $b$. 

Let $P$ be a rational Dyck path in $\mathfrak{D}_{n}^{(a,1)}$, and 
$\mathfrak{q}:=(q_1,\ldots,q_a)$ be lattice paths obtained by the 
vertical strip decomposition.
From Section \ref{sec:dpd}, lattice paths $q_{i}$, $1\le i\le a$, 
satisfy $q_{i+1}\preceq_{v}q_{i}$ for $1\le i\le b-1$ and 
$q_b\preceq_{v}q_{1}$.

Let $\mathcal{D}_{i}$ be a Dyck tiling above $q_i$, and $\upsilon_{i}:=\upsilon(\mathcal{D}_{i})$
be the post-order word reading from right to left obtained from $\mathcal{D}_{i}$ by the left DTS 
bijection.
The top paths $t_i$ of $\mathcal{D}_{i}$ satisfies $t_{i+1}\preceq_{v}t_{i}$.

By a similar argument to the proof of Lemma \ref{lemma:hnu}, we obtain the following lemma.
%%%%%%%%%%%%%
\begin{lemma}
Suppose $q_{i+1}\preceq_{v}q_{i}$, and $\mathcal{D}_{i}$ and $\mathcal{D}_{i+1}$ 
have the same top path. 
We have $\upsilon(\mathcal{D}_{i})\le\upsilon(\mathcal{D}_{i+1})$ if neither
$\mathcal{D}_{i}$ and $\mathcal{D}_{i+1}$ have non-trivial Dyck tiles.
\end{lemma}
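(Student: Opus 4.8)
The statement is the vertical counterpart of Lemma~\ref{lemma:hnu}, and there are two natural ways to obtain it: mimic the proof of Lemma~\ref{lemma:hnu} line by line in the ``left'' setting, or deduce it from Lemma~\ref{lemma:hnu} by the reflection $\sharp$ already used in the proof of Proposition~\ref{prop:transss}. I would take the second route, as it recycles machinery that is in place. Recall that $\sharp$ (defined by $N^{\sharp}=E$, $E^{\sharp}=N$, together with reversal of the word, $P^{\sharp}=p_{(a+b)n}^{\sharp}\cdots p_{1}^{\sharp}$) sends an $(a,1)$-Dyck path $P$ to a $(1,a)$-Dyck path $P^{\sharp}$, and, by the proof of Proposition~\ref{prop:transss}, intertwines the vertical strip decomposition of $P$ with the horizontal strip decomposition of $P^{\sharp}$.

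First I would check that reflection along $y=-x$ transports all the data of the lemma: it carries a cover-inclusive $(1,1)$-Dyck tiling to a cover-inclusive one, maps single boxes to single boxes (so the hypothesis that $\mathcal{D}_{i}$ and $\mathcal{D}_{i+1}$ have no non-trivial Dyck tiles is preserved), sends a common top path to a common top path, and interchanges the left and right strip-growths in the Dyck tiling strip construction; consequently the left DTS bijection of $\mathcal{D}_{i}$ becomes the right DTS bijection of its reflection, and the right-to-left post-order reading $\upsilon$ becomes the ordinary post-order reading $\nu$ of the reflected tiling. Since $\preceq_{v}$ is built from the height sequence and $\sharp$ exchanges the roles of the step and height sequences, the relation $q_{i+1}\preceq_{v}q_{i}$ becomes a relation of type $\preceq_{h}$ between the reflected paths, which together with the common top path is exactly the hypothesis of Lemma~\ref{lemma:hnu}.

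Applying Lemma~\ref{lemma:hnu} then gives an inequality between the $\nu$-words of the reflected tilings, and translating back through the identifications above produces an inequality between $\upsilon(\mathcal{D}_{i})$ and $\upsilon(\mathcal{D}_{i+1})$. The one delicate point — and the step I would write out carefully — is to verify that the net effect of the several reversals that $\sharp$ performs (the reversed index convention of the vertical decomposition, as in $q_{i+1}\preceq_{v}q_{i}$ against $p_{i}\preceq_{h}p_{i+1}$; the right-to-left versus left-to-right post-order reading; and the relabelling of chords induced by the reflection) is precisely to turn the inequality $\nu(\mathcal{D}_{i+1})\le\nu(\mathcal{D}_{i})$ of Lemma~\ref{lemma:hnu} into $\upsilon(\mathcal{D}_{i})\le\upsilon(\mathcal{D}_{i+1})$, and not its reverse. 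If one would rather not invoke $\sharp$, the alternative is to repeat the proof of Lemma~\ref{lemma:hnu} directly: introduce a left Hermite history, prove the left analogue of Proposition~\ref{prop:dualHhDTS} (again $h_{1}(c)+h_{2}(c)=n+1$, now for the left bijections), observe that $q_{i+1}\preceq_{v}q_{i}$ with a common top path forces exactly one left-Hermite label to drop by one, identify the induced transposition of consecutive values in the word, and read off the inequality after complementing labels; the cost there is building and justifying the left Hermite history, which is why the reduction via $\sharp$ is cleaner.
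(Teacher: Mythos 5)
Your proposal is correct in substance but takes a genuinely different route from the paper. The paper's entire proof of this lemma is the sentence ``By a similar argument to the proof of Lemma~\ref{lemma:hnu}, we obtain the following lemma,'' i.e.\ it implicitly follows your \emph{second} suggested route: redo the Hermite-history argument with the left DTS bijection and the right-to-left post-order reading, the reversal of the inequality being explained only in the subsequent remark (``the order is reversed\ldots because we make use of left DTS''). Your primary route --- reducing to Lemma~\ref{lemma:hnu} via the reflection $\sharp$ along $y=-x$, which the paper already uses in the proof of Proposition~\ref{prop:transss} to intertwine the vertical and horizontal strip decompositions --- is arguably cleaner, since it avoids constructing a left Hermite history and a left analogue of Proposition~\ref{prop:dualHhDTS} from scratch; what it costs is exactly the bookkeeping you flag as the delicate point: the reflection reverses the index order of the strips (so $q_{i+1}\preceq_{v}q_{i}$ becomes a $\preceq_h$ relation between reflected paths in the \emph{opposite} index order), swaps left and right DTS, and turns the right-to-left post-order into the ordinary one, and one must verify that these reversals compose to give $\upsilon(\mathcal{D}_{i})\le\upsilon(\mathcal{D}_{i+1})$ rather than its reverse. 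Since that direction check is the only content of the lemma beyond Lemma~\ref{lemma:hnu} itself, a complete write-up must carry it out explicitly rather than defer it; but as the paper supplies no more detail than you do on precisely this point, your proposal is at least as rigorous as the paper's own proof, and the reduction via $\sharp$ is a legitimate alternative that makes the duality between the two lemmas structural rather than ad hoc.
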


\begin{remark}
Note that the order is reversed compared to the horizontal strip decomposition.
This is because we make use of left DTS.
We also read the labels of chords from right to left in the post-order.
\end{remark}

Let $N(\upsilon_{1},\ldots,\upsilon_{n})$ be the sum of entries of 
the insertion history $\mu(\upsilon_i)$, $1\le i\le a$.
\begin{prop}
\label{prop:wtDdual}
We have 
\begin{align}
\label{eqn:wtDdual}
\mathrm{wt}(\mathcal{D})=Y(P)-N(\upsilon_{1},\ldots,\upsilon_{n})-M(a-1),
\end{align}
where $M$ is the sum of the sizes of a non-trivial Dyck tiles in $\mathcal{D}$.
\end{prop}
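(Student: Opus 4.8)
The plan is to derive Proposition~\ref{prop:wtDdual} from Proposition~\ref{prop:wtD} via the reflection along the line $y=-x$ that interchanges $a$ and $b$, and then to correct for the fact that $\mathrm{art}$ is not invariant under this reflection. Write $\sharp$ for this reflection combined with the letter swap $N^{\sharp}=E$, $E^{\sharp}=N$; then $\mathcal{D}^{\sharp}$ is a $(1,a)$-Dyck tiling above the $(1,a)$-path $P^{\sharp}$, and, as in the proof of Proposition~\ref{prop:transss}, the vertical strip decomposition of $\mathcal{D}$ corresponds under $\sharp$ to the horizontal strip decomposition of $\mathcal{D}^{\sharp}$.

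First I would check that three ingredients are preserved by $\sharp$. The reflection sends a Dyck tile characterized by an enlarged path of size $m$ to a Dyck tile characterized by the reversed path, again of size $m$, so $\mathcal{D}$ and $\mathcal{D}^{\sharp}$ have the same multiset of tile sizes and in particular the same value of $M$. Transposing a Young diagram preserves its number of boxes, so $Y(P^{\sharp})=Y(P)$. Finally, the word $\upsilon_{i}$ was defined by applying the \emph{left} DTS bijection and reading the chord labels from right to left precisely so that it coincides with the word produced by the ordinary (right, left-to-right) DTS bijection on the corresponding strip of $\mathcal{D}^{\sharp}$ (this is the content of the Remark stated just before Proposition~\ref{prop:wtDdual}); hence the two multisets of insertion histories $\{\mu(\upsilon_{i})\}$ and $\{\mu(\nu^{i})\}$ agree, and $N(\upsilon_{1},\ldots,\upsilon_{n})$ for $\mathcal{D}$ equals $N(\nu^{1},\ldots,\nu^{n})$ for $\mathcal{D}^{\sharp}$.

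Next I would compare the weights directly. A tile of $\mathcal{D}^{\sharp}$ characterized by an enlarged path of size $m$ contributes $\mathrm{art}=am+1$, whereas the corresponding tile of $\mathcal{D}$ contributes $\mathrm{art}=m+1$; the single boxes (size $0$) contribute $1$ on both sides. Summing over the tiles $d$ of $\mathcal{D}$, with sizes $m_{d}$, therefore gives
\begin{align*}
\mathrm{wt}(\mathcal{D}^{\sharp})-\mathrm{wt}(\mathcal{D})
=\sum_{d}\bigl((am_{d}+1)-(m_{d}+1)\bigr)
=(a-1)\sum_{d}m_{d}
=(a-1)M .
\end{align*}
By Proposition~\ref{prop:wtD} applied to the $(1,a)$-tiling $\mathcal{D}^{\sharp}$ we have $\mathrm{wt}(\mathcal{D}^{\sharp})=Y(P^{\sharp})-N(\nu^{1},\ldots,\nu^{n})$, which equals $Y(P)-N(\upsilon_{1},\ldots,\upsilon_{n})$ by the previous paragraph; subtracting $(a-1)M$ from both sides yields Eqn.~(\ref{eqn:wtDdual}).

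The main obstacle is making the correspondence in the second paragraph precise: one must verify that the ``left DTS read right-to-left'' convention for $\upsilon_{i}$ genuinely intertwines, under $\sharp$, with the ordinary DTS convention for $\nu^{i}$ — that a chord carrying a given left-DTS label is sent to a chord carrying the matching right-DTS label — so that the associated $a$-Stirling permutations, and hence the sums of their insertion histories, match up; this is where the asymmetry between the horizontal and vertical decompositions is actually absorbed. Everything else (invariance of $M$, the equality $Y(P^{\sharp})=Y(P)$, and the $\mathrm{art}$ bookkeeping above) is routine. As an alternative one can repeat the induction in the proof of Proposition~\ref{prop:wtD} directly, carrying the extra summand $-(a-1)M$ through the inductive step: each step adds one box, which either creates a new single box of $\mathrm{art}=1$ (leaving $M$ unchanged) or enlarges a non-trivial tile by one unit (increasing $M$ by $1$), and in either case one checks that the resulting change in $\mathrm{wt}(\mathcal{D})$ is matched by the change in $Y(P)-N-(a-1)M$, using Proposition~\ref{prop:wtD}'s analysis of how $N$ varies.
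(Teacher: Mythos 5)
Your argument is correct, but it takes a genuinely different route from the paper. The paper's own proof simply reruns the induction of Proposition \ref{prop:wtD} (adding one box to the top path at each step) and asserts that the discrepancy between $\mathrm{art}$ and tile size for non-trivial $(a,1)$-tiles accumulates to the correction term $M(a-1)$; your second, ``alternative'' paragraph is essentially that proof. Your primary argument instead reduces the statement to Proposition \ref{prop:wtD} via the reflection $\sharp$, and this buys two things: it isolates \emph{exactly} where $M(a-1)$ comes from, namely the clean computation $\mathrm{art}=am+1$ for a size-$m$ tile of the $(1,a)$-tiling $\mathcal{D}^{\sharp}$ versus $\mathrm{art}=m+1$ for the corresponding tile of the $(a,1)$-tiling $\mathcal{D}$ (both agreeing with $bn+1$ for the respective $(a,b)$), and it avoids re-examining how $N$ changes under box additions. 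The cost is the intertwining claim you flag: that the left-DTS, right-to-left post-order words $\upsilon_{i}$ of the vertical strips of $\mathcal{D}$ coincide with the ordinary DTS words $\nu^{i}$ of the horizontal strips of $\mathcal{D}^{\sharp}$, so that $N(\upsilon_{1},\ldots,\upsilon_{a})=N(\nu^{1},\ldots,\nu^{a})$. The paper never proves this either --- its Remark after the lemma preceding Proposition \ref{prop:wtDdual} only signals that the left-DTS and right-to-left conventions were chosen to mirror the reflection --- so your proof is no less rigorous than the paper's at this point, and you are right that this is the one step that genuinely needs to be checked (reversal of the word plus the swap $N\leftrightarrow E$ carries chords to chords and left strip-growth to right strip-growth, which is what makes it work). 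The remaining ingredients you use ($Y(P^{\sharp})=Y(P)$, invariance of $M$, Proposition \ref{prop:transss} for the duality of the two strip decompositions) are all sound.
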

%%%%%%%%%%
\begin{proof}
The proof is essentially the same as the proof of Proposition \ref{prop:wtD}.
The difference is that the statistic $\mathrm{art}(\mathcal{D})$ for a non-trivial
Dyck tile is not equal to the size of a non-trivial Dyck tile in $\mathcal{D}$.
In Eqn. (\ref{eqn:wtDdual}), this difference is computed by $M(a-1)$.
This completes the proof.
\end{proof}

To obtain the $(a,1)$-Dyck tiling from $a$ $(1,1)$-Dyck tilings, 
we make use of the Hermite history as in case of a $(1,b)$-Dyck tiling.
Here, we have to use the Hermite history attached to the $E$ steps
in the lowest path.

\begin{example}
We consider a $(3,1)$-Dyck tiling as in Figure \ref{fig:a1}.
\begin{figure}[ht]
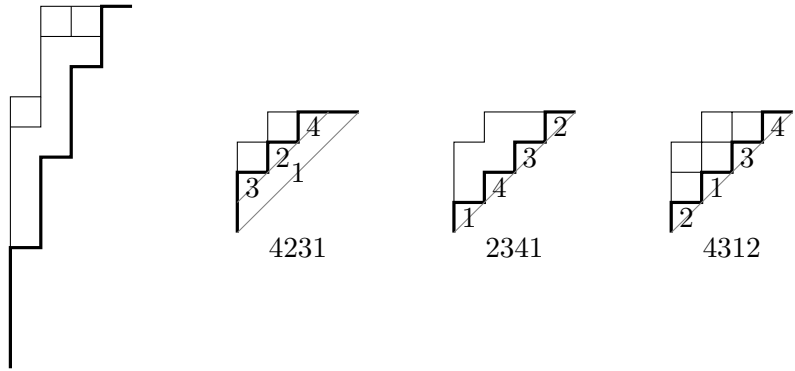

\tikzpic{-0.5}{[scale=0.4]
\draw[very thick](0,0)--(0,4)--(1,4)--(1,7)--(2,7)--(2,10)--(3,10)--(3,12)--(4,12);
\draw(0,4)--(0,9)--(1,9)--(1,12)--(3,12);
\draw(0,8)--(1,8)--(1,9)(1,11)--(3,11)(2,11)--(2,12);
}
\qquad 
\tikzpic{-0.5}{[scale=0.4]
\draw[very thick](0,0)--(0,2)--(1,2)--(1,3)--(2,3)--(2,4)--(4,4);
\draw(0,2)--(0,3)--(1,3)--(1,4)--(2,4);
\draw[gray](0,0)--(4,4)(0,1)--(3,4);
\draw(2,2)node{$1$}(0.5,1.5)node{$3$}(1.5,2.5)node{$2$}(2.5,3.5)node{$4$};
\draw(2,-0.5)node{$4231$};
}\qquad
\tikzpic{-0.5}{[scale=0.4]
\draw[very thick](0,0)--(0,1)--(1,1)--(1,2)--(2,2)--(2,3)--(3,3)--(3,4)--(4,4);
\draw(0,1)--(0,3)--(1,3)--(1,4)--(3,4);
\draw[gray](0,0)--(4,4);
\draw(0.5,0.5)node{$1$}(1.5,1.5)node{$4$}(2.5,2.5)node{$3$}(3.5,3.5)node{$2$};
\draw(2,-0.5)node{$2341$};
}\qquad
\tikzpic{-0.5}{[scale=0.4]
\draw[very thick](0,0)--(0,1)--(1,1)--(1,2)--(2,2)--(2,3)--(3,3)--(3,4)--(4,4);
\draw(0,1)--(0,3)--(1,3)--(1,4)--(3,4)(0,2)--(1,2)--(1,3)--(2,3)--(2,4);
\draw[gray](0,0)--(4,4);
\draw(0.5,0.5)node{$2$}(1.5,1.5)node{$1$}(2.5,2.5)node{$3$}(3.5,3.5)node{$4$};
\draw(2,-0.5)node{$4312$};
}
\caption{Decomposition of a $(3,1)$-Dyck tiling}
\label{fig:a1}
\end{figure}
%%%%%%%%%%%%
Since $\upsilon_1=4231$, $\upsilon_{2}=2341$ and $\upsilon_{3}=4312$, 
we have $\mu(\upsilon_1)=(0,0,1,0)$, $\mu(\upsilon_2)=(0,0,1,2)$ and 
$\mu(\upsilon_3)=(0,1,0,0)$. We have $N(\upsilon_1,\upsilon_2,\upsilon_3)=4$
We have $Y(P)=15$, $M=2$ and $a=3$ for the $(3,1)$-Dyck tiling $\mathcal{D}$.
We obtain $\mathrm{wt}(\mathcal{D})=15-5-4=6$.

\end{example}

\subsection{\texorpdfstring{$(a,b)$}{(a,b)}-Dyck tilings}
Let $\mathcal{D}$ be a $(a,b)$-Dyck tiling, 
and $(r_{i,j})$ for $1\le i\le b$ and $1\le j\le a$ be its Dyck path 
decomposition.
Note that each row gives a horizontal strip decomposition of some 
$(1,b)$-Dyck tiling, and each column gives a vertical strip decomposition 
of some $(a,1)$-Dyck tiling.
Therefore, one can apply Propositions \ref{prop:wtD} or \ref{prop:wtDdual}
to rows or columns of $(r_{i,j})$.

\bibliographystyle{amsplainhyper} 
\bibliography{biblio}

\end{document}